\documentclass[11pt]{article}
%
%
%

\textheight=20.5cm \textwidth=13.5cm \evensidemargin=-0.5cm
\oddsidemargin=-0.5cm \voffset=0cm \hoffset=1.5cm

\usepackage{hyperref} 

\usepackage{amsmath}
\usepackage{amssymb}
\usepackage{amscd}
\usepackage{textcomp}


\usepackage{amsthm}
\theoremstyle{plain}
\newtheorem{theorem}{Theorem}[section]

\newtheorem{lemma}[theorem]{Lemma}

\newtheorem{proposition}[theorem]{Proposition}

\theoremstyle{definition}
\newtheorem{definition}[theorem]{Definition}

\newtheorem{example}[theorem]{Example}
\newtheorem{remark}[theorem]{Remark}

\theoremstyle{remark}

%

\usepackage{xy}
\xyoption{all}


\newdir^{ (}{{}*!/-3pt/\dir^{(}}    
\newdir^{  }{{}*!/-3pt/\dir^{}}    
\newdir_{ (}{{}*!/-3pt/\dir_{(}}    
\newdir_{  }{{}*!/-3pt/\dir_{}}


\newcounter{zahl}

\long\def\forget#1{}


\def\theenumi{(\alph{enumi})}

\def\p@enumii{\theenumi}


\newcommand{\DS}{\displaystyle}

\newcommand{\cF}{\mathcal{F}}
\newcommand{\cG}{\mathcal{G}}

\newcommand{\cO}{\mathcal{O}}


\DeclareMathOperator{\Spec}{Spec}

\DeclareMathOperator{\charakt}{char}





\renewcommand{\phi}{\varphi}
\renewcommand{\epsilon}{\varepsilon}


\usepackage{amsfonts}
\newcommand{\BOne} {{\mathchoice{\hbox{\rm1\kern-2.7pt l\kern.9pt}}
                              {\hbox{\rm1\kern-2.7pt l\kern.9pt}}
                              {\hbox{\scriptsize\rm1\kern-2.3pt l\kern.4pt}}
                              {\hbox{\scriptsize\rm1\kern-2.4pt l\kern.5pt}}}}

\newcommand{\BA}{{\mathbb{A}}}

\newcommand{\BC}{{\mathbb{C}}}
\newcommand{\BD}{{\mathbb{D}}}

\newcommand{\BG}{{\mathbb{G}}}

\newcommand{\BP}{{\mathbb{P}}}
\newcommand{\BQ}{{\mathbb{Q}}}

\newcommand{\BZ}{{\mathbb{Z}}}

\newcommand{\CA}{{\cal{A}}}
\newcommand{\CB}{{\cal{B}}}

\newcommand{\CF}{{\cal{F}}}
\newcommand{\CG}{{\cal{G}}}

\newcommand{\CO}{{\cal{O}}}
\newcommand{\CP}{{\cal{P}}}

\newcommand{\CT}{{\cal{T}}}

\newcommand{\FG}{{\mathfrak{G}}}




\let\setminus\smallsetminus

\newcommand{\ol}[1]{{\overline{#1}}}

\newcommand{\wt}[1]{{\widetilde{#1}}}

\usepackage{ifthen}

\newcommand{\invlim}[1][]{\ifthenelse{\equal{#1}{}}
{\DS \lim_{\longleftarrow}}
{\DS \lim_{\underset{#1}{\longleftarrow}}}
}

\newcommand{\dirlim}[1][]{\ifthenelse{\equal{#1}{}}
{\DS \lim_{\longrightarrow}}
{\DS \lim_{\underset{#1}{\longrightarrow}}}
}


\newcommand{\dbl}{{\mathchoice{\mbox{\rm [\hspace{-0.15em}[}}
                              {\mbox{\rm [\hspace{-0.15em}[}}
                              {\mbox{\scriptsize\rm [\hspace{-0.15em}[}}
                              {\mbox{\tiny\rm [\hspace{-0.15em}[}}}}
\newcommand{\dbr}{{\mathchoice{\mbox{\rm ]\hspace{-0.15em}]}}
                              {\mbox{\rm ]\hspace{-0.15em}]}}
                              {\mbox{\scriptsize\rm ]\hspace{-0.15em}]}}
                              {\mbox{\tiny\rm ]\hspace{-0.15em}]}}}}
\newcommand{\dpl}{{\mathchoice{\mbox{\rm (\hspace{-0.15em}(}}
                              {\mbox{\rm (\hspace{-0.15em}(}}
                              {\mbox{\scriptsize\rm (\hspace{-0.15em}(}}
                              {\mbox{\tiny\rm (\hspace{-0.15em}(}}}}
\newcommand{\dpr}{{\mathchoice{\mbox{\rm )\hspace{-0.15em})}}
                              {\mbox{\rm )\hspace{-0.15em})}}
                              {\mbox{\scriptsize\rm )\hspace{-0.15em})}}
                              {\mbox{\tiny\rm )\hspace{-0.15em})}}}}

\newcommand{\dotBD}{\vbox{\hbox{\kern2pt\bf.}\vskip-4.5pt\hbox{$\BD$}}}


\usepackage{color}
\newcounter{commentcounter}

\def\?{\ 
{\bf\color{red}???}\ 
\immediate\write16{}
\immediate\write16{Warning: There was still a question mark . . . }
\immediate\write16{}}


%


\newbox\mybox
\def\arrover#1{\mathrel{
       \setbox\mybox=\hbox spread 1.4em{\hfil$\scriptstyle#1$\hfil}
       \vbox{\offinterlineskip\copy\mybox
             \hbox to\wd\mybox{\rightarrowfill}}}}


\begin{document}


\author{Esmail\ Arasteh Rad\footnote{This work was completed while the first author was  visiting the Institute for Research in Fundamental Sciences
(IPM).} and Somayeh Habibi\footnote{This research was in part supported by a grant from IPM(No. 93510038).} \footnote{Corresponding author. E-mail address: shabibi@ipm.ir}}

\title{On the Motive of a Fibre Bundle and its Applications}

\maketitle

\begin{abstract}

In this article we compute the motive associated to a cellular fibration $\Gamma$ over a smooth scheme $X$ inside Veovodsky's motivic categories. We implement this result to study the motive associated to a $G$-bundle, and additionally to study motives of varieties admitting a resolution of singularities by a tower of cellular fibrations (e.g. affine Schubert varieties in a twisted affine flag variety).  

\noindent
{\it Mathematics Subject Classification (2000)\/}: 
14F42,  
(14C25,
14D99,
20G15)
\end{abstract}
\textbf{keywords:} mixed Tate motives, cellular fibrations, principal $G$-bundles, wonderful compactification, affine Schubert varieties

\begin{center}

Introduction

\end{center}

A fundamental result of V. Voevodsky states that the (higher) Chow groups of a quasi-projective variety $X$ over a perfect field $k$, of any characteristic, can be viewed as the motivic cohomologies of $X$ \cite{VVa}. As a consequence of this result, A. Huber and B. Kahn established a decomposition of the pure Tate motive $M(Y)$, associated to a smooth variety $Y$, in terms of its motivic fundamental invariants; see Remark \ref{PureTateDec}. Regarding this we prove a motivic version of Leray-Hirsch theorem for a cellular fibration; see Theorem \ref{Leray Hirsch for Voevodsky motives}. 

\begin{theorem}\label{LerayHirschVoevodskyMotives0}
Let $X$ be a smooth irreducible variety over a perfect field $k$. Let $\pi:\Gamma \rightarrow X$ be a proper smooth locally trivial (for Zariski topology) fibration with fiber $F$.  Furthermore assume that $F$ is cellular and satisfies Poincar\'e duality. Then we have a decomposition 
$$
M(\Gamma)\cong \bigoplus_{\geq 0}CH_p(F) \otimes M(X)(p)[2p]
$$

in $DM_{gm}^{eff}(k)$.
  
\end{theorem}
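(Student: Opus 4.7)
The strategy is to lift the cellular filtration of $F$ to a filtration of $\Gamma$ over $X$, apply Voevodsky's Gysin triangles inductively, and use Poincar\'e duality on $F$ to show the resulting triangles split. Concretely, fix a cellular filtration $\emptyset = F_{-1} \subset F_0 \subset \cdots \subset F_N = F$ by closed subvarieties whose successive strata $F_i \setminus F_{i-1}$ are disjoint unions of affine spaces. The Zariski-local triviality of $\pi$ (together with the fact that, in the situations of interest, the transition cocycle respects the cellular structure) produces a compatible filtration $\emptyset = \Gamma_{-1} \subset \cdots \subset \Gamma_N = \Gamma$ in which each stratum $\Gamma_i \setminus \Gamma_{i-1}$ is a disjoint union of affine bundles over $X$. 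By $\BA^1$-homotopy invariance in $DM_{gm}^{eff}(k)$, $M(\Gamma_i \setminus \Gamma_{i-1}) \cong M(X)^{\oplus n_i}$, where $n_i$ is the number of cells in $F_i \setminus F_{i-1}$.

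Setting $U_i := \Gamma \setminus \Gamma_{i-1}$, the smooth pair $(U_i, \Gamma_i \setminus \Gamma_{i-1})$ yields the Gysin triangle
$$M(U_{i+1}) \longto M(U_i) \longto M(X)^{\oplus n_i}(c_i)[2c_i] \longto M(U_{i+1})[1],$$
where $c_i = \dim F - \dim(F_i \setminus F_{i-1})$ is the codimension of the stratum in the fiber. If each such triangle splits, iterating from $U_{N+1} = \emptyset$ down to $U_0 = \Gamma$ yields $M(\Gamma) \cong \bigoplus_{i} M(X)^{\oplus n_i}(c_i)[2c_i]$. Poincar\'e duality on $F$ identifies $n_i$ with $\rank CH_{\dim F - c_i}(F)$, so reindexing by cell dimension recasts the decomposition in the stated form $\bigoplus_{p \geq 0} CH_p(F) \otimes M(X)(p)[2p]$.

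The main obstacle is to establish the splitting of each Gysin triangle. Over a closed point $x \in X$ each such triangle restricts to the Gysin triangle for the cellular variety $F$ itself, which splits by the pure Tate decomposition recorded in Remark \ref{PureTateDec}. To globalize, I would use the fundamental classes $[\Gamma_i] \in CH^{c_i}(\Gamma)$ of the closed strata: by Poincar\'e duality on $F$ these restrict on any fiber to the cohomology classes dual to the corresponding cell closures, and through the motivic cycle class map they yield morphisms $M(\Gamma) \to M(X)(c_i)[2c_i]$ splitting the Gysin connecting maps. Equivalently, one may verify that the obstructions to splitting lie in $\Hom$-groups in $DM_{gm}^{eff}(k)$ between Tate twists of $M(X)$, which are expressible through higher Chow groups of $X$ and vanish because the fiberwise splittings assemble coherently thanks to Zariski-local triviality of $\pi$.
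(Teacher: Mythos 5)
There is a genuine gap at the very first step of your argument: the cellular filtration of the fiber does \emph{not} lift to a global filtration of $\Gamma$ by closed subschemes. Zariski-local triviality only gives isomorphisms $\pi^{-1}(U_j)\cong U_j\times F$, and the transition automorphisms of $F$ over $U_j\cap U_k$ have no reason to preserve the cells; your parenthetical assumption that ``the transition cocycle respects the cellular structure'' is not among the hypotheses and fails in general. Already for $F=\BP^1$ a closed stratum $\Gamma_0\subset\Gamma$ restricting to the point-cell in every fiber would be a section of the $\BP^1$-bundle, i.e.\ a line subbundle of the underlying rank-$2$ vector bundle, and for higher-dimensional projective bundles your filtration amounts to a full flag of subbundles --- which need not exist. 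Consequently the closed strata $\Gamma_i$, their fundamental classes $[\Gamma_i]$, and the Gysin triangles you propose to induct on simply are not available, and the rest of the argument has nothing to run on. Your fallback claim is also incorrect: the obstruction groups $\Hom(M(X)(a)[2a],M(X)(b)[2b+1])$ between Tate twists of $M(X)$ are (bi)graded pieces of motivic cohomology of $X\times X$ and do not vanish in general, so one cannot conclude splitting that way either.

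The paper's proof is designed precisely to avoid this globalization problem. It never filters $\Gamma$; instead it chooses homogeneous classes $\zeta_{i,p}\in CH^\ast(\Gamma)$ whose restrictions to a single fiber form a basis (irreducibility of $X$ plus a specialization argument makes one fiber suffice), uses the identification of Chow groups with $\Hom$'s into Tate objects to get $M(\Gamma)\to\BZ(p)[2p]$, dualizes via Poincar\'e duality on $F$, and then --- this is the key trick missing from your proposal --- composes with the diagonal $M(\Gamma)\to M(\Gamma)\otimes M(\Gamma)$ and $M(\pi)\otimes\varphi$ to manufacture a single global morphism $M(\Gamma)\to\bigoplus_p CH_p(F)\otimes M(X)(p)[2p]$. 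Whether this fixed morphism is an isomorphism is then a Zariski-local question, settled by Mayer--Vietoris induction on a trivializing cover and the K\"unneth formula together with the pure Tate decomposition of $M(F)$. In short: construct the comparison map globally from fiberwise data, check it locally --- rather than trying to build local (cellular) geometry into a global filtration, which is obstructed. If you want to repair your write-up along the paper's lines, replace the filtration of $\Gamma$ by the choice of the classes $\zeta_{i,p}$ and add the diagonal argument; note also that a cycle class alone only gives a map to $\BZ(c)[2c]$, never to $M(X)(c)[2c]$, so the diagonal step cannot be skipped.
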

 
We implement the above result, together with the motivic version of the decomposition theorem, due to Corti and Hanamura \cite{C-H} and de Cataldo and Migliorini \cite{C-M}, to study motive of a variety $X$, which admits a resolution of singularities $\tilde{X}$, which in turn can be constructed as a tower of cellular fibrations; see Theorem \ref{toweres}. In particular, one can use this approach to study the motive associated to an \emph{affine Schubert variety} $S(\omega)$ in a \emph{twisted partial affine flag variety}. The affine Schubert varieties were first introduced by G. Pappas and M. Rapoport \cite{P-R} and later received much attention according to the following reasons. First, they play significant role in the theory of local models for Shimura varieties, see \cite{Ha}, \cite{PRS} and \cite{Ri}. Second, they also appear as local models for the moduli stacks of global $G$-shtukas, which are function field analogs for Shimura varieties\footnote{This work, was initially proposed as a part of an extensive research plan, which aims to investigate the motives associated with the moduli stacks of global $G$-shtukas in Voevodsky's motivic categories, using local theory of global $G$-shtukas \cite{AH_Local}, and the techniques which we developed in \cite{Ar-Ha}; see for example proof of \cite[Proposition~4.30]{Ar-Ha}. Notice that, as is proved in \cite[Proposition~2.16]{Var} (also see \cite{AH_Global} for the case of non-constant reductive groups), the moduli stacks of global $G$-shtukas are Deligne-Mumford. Furthermore, the theory of motives for Deligne-Munfored stacks was ``partly'' explored in \cite{Toen} and \cite{Choud}. We leave further discussions in this direction for another occasion.\\}
; see \cite{Var} and also \cite{Ar-Ha}. \\

\noindent
Consequently at the end of Section \ref{Motive of cellular fibrations} we show that 


\begin{proposition}\label{PropMotiveAffSchubertVars}
Let $\BG$ be a connected reductive algebraic group over the local field $K := k\dpl z \dpr$
of Laurent series with algebraically closed residue field $k$. Assume either $\BG$ is constant (i.e. $\BG:=G\times_k k\dpl z\dpr$ for a connected split reductive group $G$ over $k$) or that $\charakt k=0$ and  Grothendieck's standard conjectures and Murre's conjecture hold; see Remark \ref{Corti_Hanamura_Dec}. Then the motive $M(S(\omega))$ is pure Tate.
\end{proposition}


Finally, as the second application of Theorem \ref{LerayHirschVoevodskyMotives0}, we use the combinatorial tools provided by the theory of wonderful compactification according to \cite{DeConciniProcesi} and \cite{Renner}, to study motives of $G$-bundles.

\begin{definition}\label{RMT}
Let $X$ be a scheme over a perfect field $k$.  We denote the smallest tensor thick subcategory of $\textbf{DM}_{gm}^{eff}(k)$(resp. $\textbf{DM}_{gm}^{eff}(k)\otimes \BQ$) containing the category of mixed Tate motives, see Definition \ref{DefMixedTate}, and the motive $M(X)$ (resp. $M(X)\otimes \BQ$)by $\textbf{MT}_X(k,\BZ)$ (resp. $\textbf{MT}_X(k,\BQ)$). We call it the category of relative mixed Tate motives over $X$ with integral (resp. rational) coefficients.
\end{definition}

Let $\CG$ be a $G$-bundle over a smooth irreducible variety $X$ over $k$. 
One might naturally conjecture that the restriction of the motive $M(\CG)$ to $k^{alg}$ lies in the category of relative mixed Tate motives over $X$. The following theorem shows that this expectation is true to a large extent.

\begin{theorem}\label{MGB}
Let $k$ be a perfect field. let $\CG$ be a $G$-bundle over a smooth irreducible variety $X$ over $k$. We have the following statements
\begin{enumerate}
\item\label{alef} 
If $X$ is (geometrically) cellular then $M(\CG)$ is geometrically mixed Tate.
\item\label{ba} 
If $X$ is a smooth projective curve then there is a finite separable field extension $k'/k$ such that the restriction of $M(\CG)$ to $k'$ lies in $\textbf{MT}_X(k',\BQ)$.

\item\label{jim}
If $\CG$ is Zariski locally trivial and $G$ has connected center, then $M(\CG)$ lies in $\textbf{MT}_X(k,\BZ)$.
\item\label{dal}
If $k=\BC$ and $G$ has connected center, then the motive $M(\CG)\otimes \BQ$ lies in $\textbf{MT}_X(k,\BQ)$.
\end{enumerate}
\end{theorem}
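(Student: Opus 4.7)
The unifying strategy is to apply the motivic Leray--Hirsch theorem (Theorem \ref{LerayHirschVoevodskyMotives0}) to the associated fibration $\bar{\CG} := \CG \times^G \bar{G} \to X$, where $\bar{G}$ denotes a smooth projective $(G\times G)$-equivariant cellular compactification of $G$ with strict normal crossing boundary $\bar{G}\setminus G = \bigcup_i D_i$. The wonderful compactification of De~Concini--Procesi and Renner supplies such a $\bar{G}$ precisely when the center of $G$ is connected, which accounts for that hypothesis in parts (\ref{jim}) and (\ref{dal}). Whenever Zariski local triviality of $\bar{\CG}$ holds, Theorem \ref{LerayHirschVoevodskyMotives0} gives
\[
M(\bar{\CG}) \;\cong\; \bigoplus_{p\geq 0} CH_p(\bar{G}) \otimes M(X)(p)[2p] \;\in\; \textbf{MT}_X(k,\BZ),
\]
and the analogous decomposition holds for each closed stratum $D_I = \bigcap_{i\in I} D_i$, which is itself smooth and cellular in the wonderful compactification. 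Iterated Gysin localization triangles along the codimension filtration on $\bar{G}$ then express $M(\CG)$ inside the thick tensor subcategory generated by $M(X)$ and its Tate twists, namely $\textbf{MT}_X(k,\BZ)$; this proves (\ref{jim}) directly since Zariski local triviality of $\CG$ propagates to $\bar{\CG}$ and to every stratum bundle $\CG\times^G D_I$.

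For (\ref{alef}) we work over $k^{\mathrm{alg}}$, using a cellular stratification $X=\bigsqcup X_i$ with $X_i\cong\BA^{d_i}$; by the triviality of reductive $G$-torsors over affine spaces (Raghunathan), $\CG|_{X_i}\cong X_i\times G$, and since $M(G)$ is mixed Tate for any split reductive $G$ (Biglari, Huber--Kahn), the Gysin triangles attached to the closure filtration on $X$ assemble $M(\CG_{k^{\mathrm{alg}}})$ as a mixed Tate motive. For (\ref{ba}), the Drinfeld--Simpson theorem produces, after a finite separable extension $k'/k$, a Zariski local trivialization of $\CG_{k'}$; working with $\BQ$-coefficients so that a central isogeny to an adjoint quotient (which has connected center) is harmless, case (\ref{jim}) over $k'$ then applies and places $M(\CG)|_{k'}$ in $\textbf{MT}_X(k',\BQ)$.

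For (\ref{dal}), Zariski local triviality can fail over $\BC$, so new input is required. The plan is to invoke Steinberg's generic triviality for reductive $G$-bundles over smooth varieties over an infinite field to obtain a dense Zariski open $U\subset X$ on which $\CG$ is Zariski locally trivial, and then to run noetherian induction on $\dim X$: the Gysin triangle
\[
M(\CG|_U) \longto M(\CG) \longto M(\CG|_Z)(c)[2c]
\]
for $Z = X\setminus U$ of codimension $c$ reduces the claim to a $G$-bundle on the lower-dimensional $Z$, to which the inductive hypothesis applies after a resolution of singularities of $Z$ and pullback of $\CG$, each preserving $\textbf{MT}_X(k,\BQ)$-status rationally. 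The principal obstacle lies in this last step: preserving the $G$-bundle structure on resolutions of closed strata and controlling $\BQ$-coefficients through both Steinberg's generic triviality and the resolution. A secondary but pervasive technicality, common to all parts, is to verify at each stage of the induction that the boundary-stratum bundle $\CG \times^G D_I$ is Zariski locally trivial over $X$, so that Theorem \ref{LerayHirschVoevodskyMotives0} can actually be reapplied to the smaller cellular fiber $D_I$.
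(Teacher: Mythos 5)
Your proof of part (\ref{alef}) is correct and is essentially the paper's own argument (cell filtration of $X$, Raghunathan's triviality over affine cells, mixed Tateness of $M(G)$, Gysin triangles). The trouble starts with parts (\ref{jim}) and (\ref{dal}), where your argument rests on a false premise: the wonderful compactification of De~Concini--Procesi and Renner exists for semisimple groups of \emph{adjoint type} (trivial center), not ``precisely when the center of $G$ is connected''. For a reductive $G$ with connected center, e.g.\ $G=\GL_n$, no such $\ol{G}$ is supplied by those references, and this is exactly where you skip the step in which the connected-center hypothesis actually does its work in the paper: since $Z(G)$ is connected it is a torus, so $\CG\to\CG'$ (the associated $G^{ad}$-bundle) is a torus bundle, Zariski locally trivial by Hilbert's theorem 90; one first proves the statement for $\CG'$ using the wonderful compactification of $G^{ad}$ together with the nested filtration by orbit-closure fibrations and Theorem \ref{Leray Hirsch for Voevodsky motives} (this part of your sketch agrees with the paper), and one then transfers the conclusion from $\CG'$ back to $\CG$ via the toric-compactification/projective-bundle argument for torus bundles (Example \ref{TorusBundle}, or the Huber--Kahn slice filtration). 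Without this two-step reduction your proof of (\ref{jim}) applies a compactification to a group for which it need not exist.

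The same missing reduction sinks your part (\ref{ba}): you route it through part (\ref{jim}), which requires connected center --- a hypothesis not available in (\ref{ba}) --- and your fix, passing to the adjoint quotient and declaring the central isogeny ``harmless'' with $\BQ$-coefficients, goes in the wrong direction. When $Z(G)$ is finite, $\CG\to\CG'$ is a torsor under a finite group scheme, so $M(\CG')\otimes\BQ$ is a direct summand of $M(\CG)\otimes\BQ$; knowing the motive of the quotient $\CG'$ is mixed Tate relative to $X$ says nothing about the remaining summands of $M(\CG)\otimes\BQ$. The paper's proof of (\ref{ba}) avoids this entirely: by Drinfeld--Simpson there are a finite separable extension $k'/k$ and finitely many points $p_i$ of $C$ such that $\CG_{k'}$ is trivial over $\dot{C}:=C\setminus\{p_i\}$ and over the fibers at the $p_i$; a single Gysin triangle $M(G)\otimes M(\dot{C}_{k'})\to M(\CG_{k'})\to\bigoplus_i M(G_{k'})(n)[2n]$ together with Lemma \ref{ReuctiveGroup} then concludes. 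Finally, for (\ref{dal}) your Steinberg-plus-noetherian-induction plan fails at the step you yourself flag: $Z=X\setminus U$ is in general singular, and replacing $\CG|_Z$ by its pullback to a resolution changes the motive --- you give no way back from $M(\CG|_{\wt{Z}})$ to $M(\CG|_Z)$, nor any reason the strata over $Z$ land in the $X$-relative category $\textbf{MT}_X(k,\BQ)$. The paper instead reduces to the adjoint bundle as above and applies Iyer's Chow--K\"unneth decomposition for \'etale-locally-trivial projective homogeneous fiber bundles to each fibration $\mathcal{D}_{\cF}$ appearing in the nested filtration \eqref{Filt1}; no generic triviality or resolution of singularities enters.
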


\noindent
Note that in the course of the proof of the above theorem \ref{MGB}, we obtain an explicit (nested) filtration on the motive of a $G$-bundle; see Section \ref{NestedFiltration}. The case of torus bundles was previously studied by Huber and Kahn \cite{H-K}. \\
The above discussion in particular implies that the torsor relation $[M(\CG)]=[M(G)].[M(X)]$  holds in $\textbf{K}_0(\textbf{DM}_{gm}^{eff}(k))$; see Proposition \ref{Prop_K_Ring}. Also compare with \cite[Appendix~A]{Beh-Dhi} where they treat the case $\charakt k=0$; see also Remark \ref{torsorrelation}. 

\bigskip

\noindent
The organization of the article is as follows. In section \ref{Notation and Conventions} we fix notation and conventions. In section \ref{Motive of cellular fibrations} we introduce the notion of \textit{motivic (relative) cellular varieties}. We show that for such a variety $X$, the motive $M(X)$ admits a decomposition similar to the Chow motive of a relative cellular variety. Note that we implement these results later to study motives of orbit closures of wonderful compactification of a reductive group; see Proposition \ref{TheoWnderfulCompProperties} and Remark \ref{WhyMotivicCellular}. Furthermore, we establish a motivic version of Leray-Hirsch theorem for cellular fibrations. We use this result to study motives of varieties admitting a resolution of singularities by a tower of cellular fibrations. In particular we prove Proposition \ref{PropMotiveAffSchubertVars}. In Section \ref{SectMotiveofG-bundles} we first recall some results about the geometry of wonderful compactifiation of a reductive group of adjoint type. Subsequently we see that the closure of a $G\times G$-orbit is (motivic) cellular. Using this result and the motivic version of Leray-Hirsch theorem we study the motive associated to a $G$-bundle.  We first consider the case when $\charakt k=0$ and $X$ is geometrically mixed Tate; see Proposition \ref{motive-of-G-bundle1}. Then we consider the case when $\charakt k$ is arbitrary and $X$ is geometrically cellular, proving part \ref{alef} of Theorem \ref{MGB}; see Proposition \ref{motive-of-G-bundle1b}. 
In the last Section \ref{NestedFiltration} we first review the  A.~Huber and B.~Kahn \cite[Section 8]{H-K} (also see Biglari \cite{Big}) filtration on the motive of a \emph{torus bundle}, which they construct using the \emph{theory of slice filtration}. Finally, using the combinatorial tools provided by the theory of wonderful compactification of reductive groups, we establish a nested filtration on the motive of a $G$-bundle $\cG$. Consequently, we prove part \ref{ba}, \ref{jim} and \ref{dal} of Theorem \ref{MGB}.\\

\bigskip

\textit{Acknowledgement.} We are grateful to L. Barbieri Viale and B.Kahn for their
helps and comments on the earlier draft of this article. We thank L. Migliorini for mentioning us an inaccuracy in the proof of Proposition \ref{motive-of-G-bundle1} in a previous version of this article. We also thank C.~V.Anghel, J. Bagalkote and J. Scholbach for editing and helpful discussions regarding this work.

\tableofcontents

\section{Notation and Conventions}\label{Notation and Conventions}

Throughout this article we assume that $k$ is a perfect field. We denote by $\textbf{Sch}_k$ (resp. $\textbf{Sm}_k$) the category of schemes (resp. smooth schemes) of finite type over $k$. 

For $X$ in $\CO b(\textbf{Sch}_k)$, let $CH_i(X)$ and $CH^i(X)$ denote Fulton's $i$-th Chow groups and let $CH_\ast(X):=\oplus_i CH_i(X)$ (resp. $CH^\ast(X):=\oplus_i CH^i(X)$).  



To denote the motivic categories over $k$, such as 

$$
\textbf{DM}_{gm}(k),~ \textbf{DM}_{gm}^{eff}(k),~ \textbf{DM}_{-}^{eff}(k),~ \textbf{DM}_{-}^{eff}(k) \otimes\mathbb{Q},~\text{etc.} 
$$  
and the functors $M:\textbf{Sch}_k\rightarrow \textbf{DM}_{gm}^{eff}(k)$ and $M^c:\textbf{Sch}_k\rightarrow \textbf{DM}_{gm}^{eff}(k)$, constructed by Voevodsky, we use the same notation that was introduced by him in \cite{VV}.\\
For the definition of the geometric motives with compact support in positive characteristic we also refer to \cite[Appendix B]{H-K}.\\

We simply use $A\to B \to C$ to denote a distinguished triangle 
$$
A \to B \to C \to A[1],
$$ 
in either of the above categories. Moreover for any object $M$ of $\textbf{DM}_{gm}(k)$ we denote by $M^*$ the internal Hom-object ${\underline{\text{Hom}}}_{\textbf{DM}_{gm}}(M,\BZ)$.\\

\begin{definition}\label{DefPureTate}
An object of $\textbf{DM}_{gm}(k)$ is called \emph{pure Tate} if it is a (finite) direct sum of copies of $\BZ (p)[2p]$ for $p \in \BZ$.
\end{definition}

\begin{definition}\label{DefMixedTate}
The thick subcategory of $\textbf{DM}_{gm}^{eff}(k)$, generated by $\mathbb{Z}(0)$ and the Tate object $\mathbb{Z}(1)$ is called \emph{the category of mixed Tate motives} and we denote it by $\textbf{MT}(k)$. Any object of $\textbf{MT}(k)$ is called a
 \emph{mixed Tate motive}. A motive $M$ is \emph{geometrically mixed Tate} if it becomes mixed Tate over $k^{alg}$.
\end{definition}

\textbf{CAUTION}: Throughout this article we either assume that $k$ admits resolution of singularities or we consider the motivic categories after passing to coefficients in $\mathbb{Q}$. \\

\noindent
Let us now move to the algebraic group theory side. \\

\noindent
Let $G$ be a connected reductive linear algebraic group over $k$. Suppose that $G$ is split, fix a maximal torus $T$ and a Borel subgroup $B$ that contains $T$.\\ 
We denote by $G^s$ the semi-simple quotient of $G$ and by $G^{ad}$ the adjoint group of $G$. \\
Let $X^\ast(T)$ (resp. $X_\ast(T)$) denote the group of cocharacters (resp. characters) of $G$. 
Let $\Phi:=\Phi(T,G)$ be the associated root system and $\Delta \subseteq \Phi(T,G)$ be a system of simple roots (i.e. a subset of $\Phi$ which form a basis for $Lie(G)$ such that any root $\beta \in \Phi$ can be represented as a sum $\beta =\sum_{\alpha \in \Delta}m_{\alpha} \alpha$, with $m_{\alpha}$ all non-negative or all non-positive integral coefficients). Let $W:=W(T,G)$ and $l:W\rightarrow \mathbb{Z}_+$ denote the corresponding Weyl group and the usual length function on $W$, respectively.  For any subset $I \subseteq \Delta$ we set $\Phi_I$ to be the subset of $\Phi$ spanned by $I$. Furthermore for $u\in W$, $I_u$ will denote the set consisting of those elements of $\Delta$ that do not occur in the shortest expression of $u$. We denote by $W_I$ the subgroup of the Weyl group $W$ generated by the reflections associated with the elements of $\Phi_I$. Let $W^I$ denote a set of representative for $W/W_I$ with minimal length. Recall that any parabolic subgroup $P$ of $G$ is conjugate with a standard parabolic subgroup, i.e. to a group of the form $P_I:=BW_IB$.\\

Let $Y$ be a variety with left $G$-action. To a $G$-bundle $\CG$ on $X$ one associates a fibration $\CG\times^G Y$ with fibre $Y$ over $X$, defined by the following quotient
$$
\CG\times Y\big{\slash}\sim,
$$
where $(x,y)\sim (xg,g^{-1}y)$ for every $g\in G$.

%
%

\section{Motive of cellular fibrations} \label{Motive of cellular fibrations}
\setcounter{equation}{0}

Bellow we introduce the notion of \emph{motivic} relatively cellular varieties. Notice that this notion is slightly weaker than the geometric notion of relatively cellular introduced by Chernousov, Gille, Merkurjev \cite{CGM} and also Karpenco \cite{Kar}.

\begin{definition}\label{relativecellular1}
A scheme $X \in \CO b(\textbf{Sch}_k)$ is called \emph{motivic relatively cellular} (with respect to the functor $M^c(-)$) if it admits a filtration by its closed subschemes:
\begin{equation}\label{filt}
\emptyset= X_{-1} \subset X_0 \subset ... \subset X_n=X 
\end{equation}

together with flat equidimensional morphisms $p_i: U_i:=X_i \setminus  X_{i-1} \rightarrow Y_i$ of relative dimension $d_i$, such that the induced morphisms $p_i^\ast:M^c(Y_i)(d_i)[2d_i] \rightarrow M^c(U_i)$ are isomorphisms in $DM_{gm}^{eff}(k)$. Here $Y_i$ is smooth proper scheme for all $1 \leq i \leq n$. Moreover we say that $X$ is \it{motivic cellular} if all $Y_i$'s appearing in the above filtration are equal to $\Spec k$. 
\end{definition}

Recall that a scheme $X$ is called \textit{relatively cellular} (resp. \textit{cellular}) if it admits a filtration as above \ref{filt} such that $U_i$ is an affine bundle (resp. affine space) over $Y_i$ (resp. over $\Spec k$) via $p_i$. In particular relatively cellular (resp. cellular) implies motivic relatively cellular (resp. motivic cellular).  

\begin{proposition}\label{freechow}
Suppose $k$ admits resolution of singularities. Assume that $X\in \cO b(\textbf{Sch}_k)$ is equidimensional of dimension $n$, which admits a filtration as in the definition \ref{relativecellular1}. Then we have the following decomposition
$$
M^c(X)=\bigoplus_i M^c(Y_i)(d_i)[2d_i].
$$
\end{proposition}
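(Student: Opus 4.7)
} My plan is to induct on the length $n$ of the filtration and in each step apply the localization distinguished triangle for $M^c$, reducing the splitting to a motivic-cohomology vanishing in a single bidegree. The base case $n=0$ is immediate since $X_0 = U_0$ and by hypothesis $M^c(X_0) = M^c(U_0) \cong M^c(Y_0)(d_0)[2d_0]$.

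For the inductive step I assume that $M^c(X_{n-1}) \cong \bigoplus_{i<n} M^c(Y_i)(d_i)[2d_i]$ and apply the localization triangle (available for $M^c$ under our running hypothesis that $k$ admits resolution of singularities, or rationally)
\begin{equation*}
M^c(X_{n-1}) \longto M^c(X_n) \longto M^c(U_n) \stackrel{\partial}{\longto} M^c(X_{n-1})[1].
\end{equation*}
The hypothesis on $p_n$ together with $Y_n$ smooth proper (so that $M^c(Y_n) = M(Y_n)$) gives $M^c(U_n) \cong M(Y_n)(d_n)[2d_n]$. It therefore suffices to show that $\partial = 0$, after which the triangle splits and the inductive step is complete.

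The key computation is the vanishing of $\partial$. Using the inductive decomposition it factors through the summands, so by additivity it is enough to prove that
\begin{equation*}
\Hom_{\textbf{DM}_{gm}^{eff}(k)}\bigl(M(Y_n)(d_n)[2d_n],\, M(Y_j)(d_j)[2d_j+1]\bigr) = 0 \quad \text{for every } j<n.
\end{equation*}
Using the cancellation theorem together with Poincar\'e duality $M(Y_n)^\vee \cong M(Y_n)(-e_n)[-2e_n]$ (with $e_n := \dim Y_n$), this Hom identifies with the motivic cohomology group $H^{p,q}(Y_n \times Y_j,\BZ)$ where $q = d_j - d_n + e_n$ and $p = 2q+1$. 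Since $p > 2q$, Bloch's identification $H^{p,q}(-,\BZ) = CH^q(-,2q-p)$ forces this group to vanish. Hence $\partial = 0$ and the triangle splits.

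The main obstacle is precisely this Hom vanishing; everything else is bookkeeping with the localization triangle. One should also verify at the outset that the motives appearing are legitimately of the expected form (in particular that $M^c(Y_i) = M(Y_i)$ for $Y_i$ smooth proper, and that the map $p_i^\ast$ of the definition lands in $M^c(U_i)$ rather than $M(U_i)$ as the compatibility of flat pullback with Poincar\'e duality requires). Once this is clarified, the proof is a clean induction with the motivic-cohomology vanishing as the only non-formal input.
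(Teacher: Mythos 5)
Your proof is correct, and while it shares the paper's outer skeleton --- induction along the filtration plus the localization triangle for $M^c$ --- the splitting mechanism is genuinely different. The paper never shows that the connecting map $\partial$ vanishes; instead it constructs an explicit section of $g_j\colon M^c(X_j)\to M^c(U_j)$: the closure of the graph of $p_j\colon U_j\to Y_j$ in $X_j\times Y_j$ is a cycle in $CH_{\dim X_j}(X_j\times Y_j)$, which, since $Y_j$ is smooth, induces a morphism $\gamma_j\colon M^c(Y_j)(d_j)[2d_j]\to M^c(X_j)$ with $g_j\circ\gamma_j=p_j^\ast$ (via the cycle-theoretic description of such Hom groups in \cite[Chap.~5, Thm.~4.2.2 and Prop.~4.2.3]{VV}); since $p_j^\ast$ is an isomorphism by hypothesis, $g_j$ acquires a section and the triangle splits. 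You instead kill $\partial$ outright by a weight argument: after cancellation and Poincar\'e duality, each component of $\partial$ lives in a motivic cohomology group of the smooth proper variety $Y_n\times Y_j$ in bidegree $(2q+1,q)$, which vanishes because $CH^q(-,-1)=0$ (and trivially when $q<0$). This is sound; your only slip is bookkeeping --- dualizing correctly gives $q=d_j-d_n+\dim Y_j$ rather than $d_j-d_n+\dim Y_n$ --- which is immaterial since $p-2q=1>0$ in either normalization. As for what each route buys: yours is more formal and avoids the correspondence construction entirely, but the splitting it produces is neither explicit nor canonical (sections of $g_n$ differ by elements of $\Hom(M^c(U_n),M^c(X_{n-1}))$, a direct sum of Chow groups of the $Y_n\times Y_j$ that is generally nonzero), whereas the paper's graph-closure correspondence yields a distinguished geometric splitting, which is what one wants for functoriality statements of the kind appearing in Proposition \ref{MotivewcsofCellularVariety}. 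Both arguments consume resolution of singularities in comparable ways (yours for duality and cancellation in $\textbf{DM}_{gm}(k)$, the paper's for the cycle description of Homs, and both for the localization triangle), and your observation that $p_i^\ast$ in Definition \ref{relativecellular1} should land in $M^c(U_i)$ rather than $M(U_i)$ is also correct.
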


\begin{proof}
We prove by induction on $\dim X$. Consider the following distinguished triangle
$$
M^c(X_{j-1}) \rightarrow M^c(X_j) \xrightarrow{\;g_j} M^c(U_j) \rightarrow M^c(X_{j-1})[1].
$$
The closure of the graph of $p_j : U_j \rightarrow Y_j$ in $X_j \times Y_j$. This defines a cycle in $CH_{dim~X_j}(X_j\times Y_j)$ and since $Y_j$ is smooth this induces the following morphism 
$$
\gamma_j : M^c(Y_j)(d_j)[2d_j] \rightarrow M^c(X_j),
$$
by \cite[Chap. 5, Theorem 4.2.2.3) and Proposition 4.2.3]{VV}, such that $g_j \circ \gamma_j =p_j^\ast$. Thus the above distinguished triangle splits.  Now we conclude by induction hypothesis.
\end{proof}


\begin{remark}\label{relativecellular2}
One can define a variant of definition \ref{relativecellular1} with respect to the functor $M(-)$. Notice that in this set up one has to replace $p_i^\ast$ by $p_{i_{\ast}}$. Note further that in this case  one may drop the assumption that $p_i$ is flat. Accordingly, a variant of the Proposition \ref{freechow} holds after imposing some additional condition. Namely, to apply Gysin triangle \cite[page 197]{VV} we have to assume that all $X_i$s appearing in the filtration of $X$ are smooth; compare proof of Proposition \ref{freechow}). Note further that in this case it is not necessary to assume that $k$ admits resolution of singularities.

\end{remark}

\begin{remark}\label{puretate}
Assume that $X$ is a motivic relatively cellular scheme, such that $Y_i$ is pure Tate for every $1 \leq i \leq n$. Then using noetherian induction and Gysin triangle one can show that $X$ is pure Tate. 
\end{remark}
Let $\textbf{Ab}$ be the category of abelian groups. Let us recall that there is a fully faithful tensor triangulated functor 
$$
i: \textbf{D}_f^b(\textbf{Ab}) \rightarrow \textbf{DM}_{gm}^{eff}(k),
$$
where  $\textbf{D}_f^b(\textbf{Ab})$ is the full subcategory of the bounded drived category $\textbf{D}^b(\textbf{Ab})$, consisting of objects with finitely generated cohomology groups; see \cite[Proposition 4.5.]{H-K}.\\

Let us state the following proposition which is proved by B.~ Kahn \cite[Proposition 3.4]{BKahn}.

\begin{proposition}\label{MotivewcsofCellularVariety}
Assume that $k$ admits resolution of singularities. For a cellular variety $X \in \cO b(\textbf{Sch}_k)$, there is a canonical isomorphism $$
\bigoplus_{p\geq0} CH_p(X) \otimes \mathbb{Z}(p)[2p] \rightarrow M^c(X),
$$ 

which is functorial both with respect to proper or flat morphisms.
\end{proposition}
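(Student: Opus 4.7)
The plan is to construct the map canonically using the motivic Borel--Moore interpretation of Chow groups, and then to verify it is an isomorphism by reducing to Proposition \ref{freechow}.

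First I would recall the identification
\[
CH_p(X) \;\cong\; \Hom_{\textbf{DM}_{gm}^{eff}(k)}\!\bigl(\BZ(p)[2p],\,M^c(X)\bigr),
\]
valid for any $X \in \cO b(\textbf{Sch}_k)$ (this is Voevodsky's description of Chow groups as Borel--Moore motivic homology in bidegree $(2p,p)$). Every element $\alpha \in CH_p(X)$ thus determines a morphism $\BZ(p)[2p] \to M^c(X)$, and by tensoring and summing I obtain a canonical map
\[
\phi_X:\;\bigoplus_{p\ge 0} CH_p(X)\otimes \BZ(p)[2p]\;\longrightarrow\; M^c(X),
\]
where $CH_p(X)\otimes\BZ(p)[2p]$ is placed in $\textbf{DM}_{gm}^{eff}(k)$ via the embedding $i:\textbf{D}^b_f(\textbf{Ab})\to \textbf{DM}_{gm}^{eff}(k)$ recalled just before the statement. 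Functoriality with respect to proper morphisms (resp.~flat morphisms of relative dimension $0$, and more generally flat equidimensional morphisms of some relative dimension) is then built into the construction, because proper pushforward and flat pullback on $M^c$ correspond under the above Hom-identification to proper pushforward and flat pullback on Chow groups (see \cite[Chap.~5]{VV}).

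To prove that $\phi_X$ is an isomorphism, I would proceed by induction on the length $n$ of a cellular filtration $\emptyset = X_{-1}\subset X_0\subset\cdots\subset X_n = X$ with $U_j=X_j\setminus X_{j-1}\cong \BA^{d_j}$. The base case $n=0$ reduces to $M^c(\BA^{d_0})\cong \BZ(d_0)[2d_0]$ and $CH_\ast(\BA^{d_0}) = \BZ\cdot[\BA^{d_0}]$ in degree $d_0$. For the inductive step I would use the localization distinguished triangle
\[
M^c(X_{j-1})\;\to\;M^c(X_j)\;\to\;M^c(U_j)\;\to\;M^c(X_{j-1})[1]
\]
together with the corresponding long exact sequence for Chow groups $CH_\ast(X_{j-1})\to CH_\ast(X_j)\to CH_\ast(U_j)\to 0$. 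Proposition \ref{freechow}, applied with every $Y_i=\Spec k$, already guarantees that the triangle splits; the class $\gamma_j\in CH_{d_j}(X_j\times \Spec k)$ used in its proof is precisely the cycle class of the cell closure $\overline{U_j}\subset X_j$. Consequently the splitting of the localization triangle is induced by elements of $CH_{d_j}(X_j)$, and $\phi_{X_j}$ is assembled from $\phi_{X_{j-1}}$ and $\phi_{U_j}$ by a compatible five-lemma style argument. By induction $\phi_{X_{j-1}}$ and $\phi_{U_j}$ are isomorphisms, hence so is $\phi_{X_j}$.

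The main obstacle I anticipate is verifying that the splitting produced by Proposition \ref{freechow}, which a priori depends on the choice of the cellular filtration, really does agree under $\phi_X$ with the cycle-class basis of $CH_\ast(X)$ given by closures of cells; in other words, showing that $\phi_X$ is not only \emph{an} isomorphism but the canonical one independent of the cellular structure. The essential point is the compatibility between the cycle class of the closure $\overline{U_j}$ and the morphism $\gamma_j$ built from the graph of $p_j:U_j\to\Spec k$ in the proof of \ref{freechow}; this is exactly the content of \cite[Chap.~5, Prop.~4.2.3]{VV}. Once that identification is in place, canonicity and functoriality for proper and flat morphisms follow formally from the functoriality of the Hom-identification recalled at the beginning.
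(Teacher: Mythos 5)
Your proposal is correct, and it reconstructs what is essentially the intended argument: the paper itself gives no proof of this proposition but quotes it from B.~Kahn \cite[Proposition 3.4]{BKahn}, where the canonical map is likewise built from the identification $CH_p(X)\cong\Hom_{\textbf{DM}_{gm}^{eff}(k)}(\BZ(p)[2p],M^c(X))$ and shown to be an isomorphism by induction on the cellular filtration via localization triangles. Your appeal to Proposition \ref{freechow} with $Y_i=\Spec k$, together with the identification of $\gamma_j$ with the cycle class of the cell closure $\overline{U_j}$ via \cite[Chap.~5, Prop.~4.2.3]{VV}, is exactly the splitting mechanism used there and in the paper's own proof of Proposition \ref{freechow}, so nothing needs correcting.
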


\begin{remark}\label{PureTateDec}
We recall the following variant of the above decomposition for the motive $M(X)$.
Assume that $X \in \cO b(\textbf{Sm}_k)$ and assume further that it is equidimensional. Then  there is a natural isomorphism 
$$
\bigoplus_{p \geq 0}CH^p(X)^{\vee} \otimes \mathbb{Z}(p)[2p] \tilde{\to} M(X)
$$
in $\textbf{DM}_{gm}^{eff}(k)$, according to \cite[Corollary~3.5]{BKahn}. Here $CH^p(X)^{\vee}$ denote the dual $\mathbb{Z}$-module.\\ 
More generally in \cite[Proposition 4.10]{H-K} Huber and Kahn prove that for a smooth variety $X$ over $k$, if the associated motive $M(X)$ is pure Tate then there is a natural decomposition $M(X) \cong \bigoplus_p c_p(X)(p)[2p]$ in terms of the corresponding motivic fundamental invariants $c_p(X)$.
\end{remark}

\begin{remark}\label{motive-general-flag-var}
Let $G$ be a split reductive group over a perfect field $k$, and let $P$ be a parabolic subgroup of $G$ which is conjugate with a standard parabolic subgroup $P_I$. Then one has the isomorphism $M(G/P)\cong \bigoplus_{w\in W^I} \mathbb{Z}(l(\omega))[2l(w)]$. Namely, the decomposition $G=\coprod_{w\in W^I} BwP$, induces a cell decomposition $G/P\cong G/P_I=\coprod_{w\in W^I} X_w$, where $X_w$ is isomorphic to the affine space $\BA^{l(w)}$. Note that the cycles $[\overline{X_w}]$ form a set of generators for the free module $CH_\ast(G/P)$.  
\end{remark}

We now want to compute the motive associated to a fiber bundle. Recall that the naive version of \emph{Leray-Hirsch theorem} does not even hold for the Chow functor. One way to tackle the problem in the algebraic set-up is to impose some stronger conditions on the fiber. Namely, one should assume that the fiber admits cell decomposition and further satisfies Poincar\'e duality (i.e. the degree map $CH_0(F)\to \mathbb{Z}$ is an isomorphism and the intersection pairings $CH_p(F)\otimes CH^p(F)\rightarrow CH_0(F)$ are perfect parings).\\ 
Let $f: \Gamma \rightarrow X$ be a smooth proper morphism that is locally trivial for Zariski topology, with fiber $F$ satisfying the above conditions. Let $\zeta_1,...,\zeta_m$ be homogeneous elements of $CH^\ast(\Gamma)$ whose restriction to any fiber form a basis of its Chow group over $\mathbb{Z}$. Then the Leray-Hirsch theorem for Chow groups states that the homomorphism
$$
\varphi : \oplus_{i=1}^m CH_\ast(X) \rightarrow CH_\ast({\Gamma})~~~~~~,~\varphi(\oplus \alpha_i)= \Sigma \zeta_i \cap f^\ast\alpha_i
$$
 is an isomorphism. When $X$ is non-singular, it means that $\zeta_i$ form a free basis for $CH^\ast(\Gamma)$ as a $CH^\ast(X)-module$. For the proof, we refer the reader to \cite[appendix C]{Col-Ful}.\\
 
Let us now state the \emph{motivic version of the Leray-Hirsch theorem}.
 
\begin{theorem}\label{Leray Hirsch for Voevodsky motives}
Let $X$ be a smooth irreducible variety over $k$. Let $\pi:\Gamma \rightarrow X$ be a proper smooth locally trivial (for Zariski topology) fibration with fiber $F$.  Furthermore, assume that $F$ is cellular and satisfies Poincar\'e duality. Then one has an isomorphism
$$
M(\Gamma)\cong \bigoplus_{p \geq 0}CH_p(F) \otimes M(X)(p)[2p]
$$  
in $\textbf{DM}_{gm}^{eff}(k)$.
\end{theorem}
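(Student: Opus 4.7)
The plan is to extract from the classical Leray--Hirsch data a comparison morphism in $\textbf{DM}_{gm}^{eff}(k)$, and verify it is an isomorphism by Zariski--Mayer--Vietoris reduction to the trivial bundle case. First I would invoke the classical Leray--Hirsch theorem for Chow groups (recalled just before the statement) to produce homogeneous cycles $\zeta_1,\dots,\zeta_m\in CH^*(\Gamma)$, with $\zeta_i \in CH^{c_i}(\Gamma)$, whose restrictions to every fibre form a $\mathbb{Z}$-basis of $CH^*(F)$ (free and finitely generated since $F$ is cellular). Via Voevodsky's identification
$$CH^{c_i}(\Gamma)\;=\;\Hom_{\textbf{DM}_{gm}^{eff}(k)}\bigl(M(\Gamma),\mathbb{Z}(c_i)[2c_i]\bigr),$$
each $\zeta_i$ is promoted to a morphism of motives, and together with the diagonal $\Delta_\Gamma$ and the functorial $M(\pi)\colon M(\Gamma)\to M(X)$ they assemble into
$$\Phi\;:=\;\bigoplus_{i}\bigl(M(\pi)\otimes \zeta_i\bigr)\circ \Delta_\Gamma \;\colon\; M(\Gamma)\;\longrightarrow\;\bigoplus_i M(X)(c_i)[2c_i].$$
Poincar\'e duality on $F$ yields a dual basis $\{\eta_i\}$ with $\eta_i\in CH_{c_i}(F)$, which identifies the right-hand side canonically with $\bigoplus_{p\ge 0} CH_p(F)\otimes M(X)(p)[2p]$, matching the target of the statement.

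Secondly, I would treat the trivial case $\Gamma = X\times F$. The K\"unneth formula in $\textbf{DM}_{gm}^{eff}(k)$ (immediate from the construction of $M$) combined with the pure Tate decomposition $M(F)\cong \bigoplus_p CH_p(F)\otimes \mathbb{Z}(p)[2p]$ (furnished by Remark \ref{PureTateDec} applied to the cellular $F$, together with Poincar\'e duality) supplies the desired isomorphism $M(X\times F)\cong \bigoplus_p CH_p(F)\otimes M(X)(p)[2p]$ on the nose. Choosing the $\zeta_i$ as pull-backs $\mathrm{pr}_F^*\xi_i$ of a basis of $CH^*(F)$ under the second projection, the compatibility of diagonals with products shows that $\Phi$ recovers this K\"unneth isomorphism, so in this case $\Phi$ is an isomorphism.

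For a general $X$, I would pick a finite Zariski cover $\{U_j\}_{j=1}^N$ over which $\pi$ trivialises. Mayer--Vietoris distinguished triangles in $\textbf{DM}_{gm}^{eff}(k)$ (following from Nisnevich descent of the Suslin--Voevodsky complexes) together with the naturality of $\Phi$ under open restriction and the five lemma allow a Noetherian induction on $N$, reducing the problem to a single trivialising open. The main obstacle is precisely this last step: over $U_j$ the restrictions $\zeta_i|_{\pi^{-1}(U_j)}$ need not be pure exterior products, so one must compare $\Phi|_{\pi^{-1}(U_j)}$ with the trivial-bundle K\"unneth isomorphism via a change-of-basis matrix with entries in $CH^*(U_j)$. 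The defining Leray--Hirsch property --- that the restriction of every $\zeta_i$ to each fibre is the prescribed basis element --- forces this matrix, after a suitable ordering of the $\zeta_i$ by codimension, to be upper-triangular with identity diagonal, hence invertible. Consequently $\Phi|_{\pi^{-1}(U_j)}$ remains an isomorphism, the induction closes, and the desired decomposition of $M(\Gamma)$ follows.
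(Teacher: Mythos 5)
Your proposal follows essentially the same route as the paper's proof: promote the classes $\zeta_i$ to morphisms $M(\Gamma)\to\mathbb{Z}(c_i)[2c_i]$ via the identification of motivic cohomology with Chow groups, assemble them with the diagonal and $M(\pi)$ into a comparison map, and reduce by Mayer--Vietoris over a trivializing Zariski cover to the case $\Gamma=X\times F$, which is handled by the K\"unneth formula and the pure Tate decomposition of $M(F)$. Your final triangular change-of-basis argument (unipotent matrix with entries in $CH^{>0}(U_j)$, ordered by codimension) is correct and in fact makes explicit a verification that the paper's proof leaves implicit when it asserts the restriction to a trivializing open "precisely follows from the K\"unneth formula."
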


\begin{proof}
Take a set of homogeneous elements $\{\zeta_{i,p}\}_{i,p}$ of $CH^\ast(\Gamma)$ such that for any $p$ the restrictions of $\{\zeta_{i,p}\}_i$ to any fiber $\Gamma_x\cong F$ form a basis for $CH^p(\Gamma_x)$. Notice that since $X$ is irreducible, it is enough that the restrictions of the $\zeta_{i,p}$'s generate $CH_\ast
(\Gamma_x)$ for the fiber over a particular $x$. 

By \cite{MVW} Theorem 14.16 and Theorem 19.1, for each $i$, $\zeta_{i,p}$ defines a morphism $M(\Gamma) \rightarrow \mathbb{Z}(p)[2p]$. Summing up all these morphisms and taking dual, by Poincar\'e duality we get the following morphism  
$$
\varphi: M(\Gamma)\rightarrow\bigoplus_p CH_p(F)\otimes \mathbb{Z}(p)[2p].
$$

Composing $M(\Delta): M(\Gamma)\rightarrow M(\Gamma\times \Gamma)\cong M(\Gamma) \otimes M(\Gamma)$, which is induced by the diagonal map $\Delta: \Gamma\times \Gamma \rightarrow \Gamma$, with $M(\pi)\otimes \varphi$, we obtain
$$
M(\Gamma)\rightarrow \bigoplus_p CH_p(F)\otimes M(X)(p)[2p].$$

Now take a covering $\{U_i\}$ of $X$ that trivializes $\Gamma$.
The restriction of this global morphism to $U_j$ is induced by the restriction of $\zeta_{i,p}$s to $U_j$. The same holds over intersections, i.e. these morphisms fit together when we pass to $U_j \cap U_k$. Thus by Mayer-Vietoris triangle (see \cite[Chapter 5, (4.1.1)]{VV}) we may reduce to the case that $\Gamma$ is a trivial fibration $X \times_k F$. This precisely follows from K\"unneth formula \cite[Proposition 4.1.7]{VV} and Remark \ref{PureTateDec} above.

\end{proof}

Let us now state the first application of the above theorem which we propose in this article. Let $\tilde{X}$ be a variety over a perfect field $k$. Suppose that $\tilde{X}$ sits in a tower

$$
\CD
\tilde{X}_n:=\tilde{X}\\
@V{p_{n-1}}VV\\
\tilde{X}_{n-1}\\
\vdots\\
@VVV\\
\tilde{X}_0
\endCD
$$

such that $\tilde{X}_i\rightarrow \tilde{X}_{i-1}$ is a proper smooth locally trivial fibration with fibre $F_i$. Furthermore assume that $F_i$ is cellular and satisfies Poincar\'e duality. We call such a variety a \emph{tower variety} over $\tilde{X}_0$. \\

\begin{theorem}\label{toweres}
Let $f: \tilde{X} \rightarrow X$ be a surjective semismall morphism. Further assume that $\tilde{X}$ is a tower variety over a smooth proper scheme $\tilde{X}_0$. Then the motive $M(X)$ is a summand of 
$$
\bigotimes_{i=0}^{n-1} \left(\bigoplus_{p \geqslant 0}CH_p(F_i)\otimes \BZ(p)[2p]\right) \otimes M(\tilde{X}_0)
$$
in $\textbf{DM}_{gm}(k)\otimes \BQ$.
\end{theorem}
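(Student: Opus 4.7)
The plan is two-fold. First, I would reduce $M(X)$ to a summand of $M(\tilde{X})$ via the motivic decomposition theorem. Second, I would compute $M(\tilde{X})$ explicitly by iterating Theorem \ref{Leray Hirsch for Voevodsky motives} up the tower and then combine the two steps.

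\textbf{Step 1 (Decomposition theorem).} Since each $p_i$ is a proper smooth locally trivial fibration with smooth (indeed cellular) fibre $F_i$, and the base $\tilde{X}_0$ is smooth and proper, an easy induction shows that $\tilde{X}$ is smooth and proper. Because $f\colon \tilde{X}\to X$ is a proper, surjective, semismall morphism from a smooth variety, the motivic version of the decomposition theorem, due to Corti--Hanamura \cite{C-H} and de Cataldo--Migliorini \cite{C-M}, applies after passing to rational coefficients: it yields a splitting of $M(\tilde{X})\otimes\BQ$ in $\textbf{DM}_{gm}(k)\otimes\BQ$ in which $M(X)\otimes\BQ$ appears as a direct summand.

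\textbf{Step 2 (Iterated Leray--Hirsch).} I would argue by induction on $n$. The base case $n=0$ is trivial. For the inductive step, the intermediate scheme $\tilde{X}_{n-1}$ is smooth and irreducible (smoothness is preserved by proper smooth fibrations; irreducibility follows from the irreducibility of the cellular fibre combined with irreducibility of $\tilde{X}_0$). The top fibration $p_{n-1}\colon\tilde{X}_n\to\tilde{X}_{n-1}$ is proper smooth, Zariski-locally trivial, and its fibre $F_{n-1}$ is cellular and satisfies Poincar\'e duality; hence Theorem \ref{Leray Hirsch for Voevodsky motives} produces
$$
M(\tilde{X}_n)\;\cong\;\bigoplus_{p\geq0}CH_p(F_{n-1})\otimes M(\tilde{X}_{n-1})(p)[2p].
$$
Substituting the inductive expression for $M(\tilde{X}_{n-1})$ into this formula and using that Tate twists and shifts commute with direct sums and tensor products, I obtain
$$
M(\tilde{X})\;\cong\;\bigotimes_{i=0}^{n-1}\!\left(\bigoplus_{p\geq0}CH_p(F_i)\otimes\BZ(p)[2p]\right)\otimes M(\tilde{X}_0).
$$
Combining this with Step 1 identifies $M(X)\otimes\BQ$ as a direct summand of the right-hand side, which is the claim.

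\textbf{Main obstacle.} The genuinely delicate part is invoking the decomposition theorem: this is precisely why the statement is formulated over $\BQ$ and why (as in Corollary \ref{ToweresCor1}) one must either work in characteristic zero or assume the Grothendieck--Murre conjecture to guarantee its applicability; in particular, the semismallness of $f$ is what permits this clean summand statement (without having to describe the other summands attached to the discriminant stratification of $X$). By comparison, Step 2 is essentially bookkeeping, provided one verifies at each stage that the inductively constructed base is smooth irreducible so that Theorem \ref{Leray Hirsch for Voevodsky motives} can be applied.
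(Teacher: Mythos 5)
Your proof follows essentially the same route as the paper's: the paper's one-line proof cites exactly the ingredients you use, namely the motivic decomposition theorem of Corti--Hanamura/de Cataldo--Migliorini applied to the semismall map $f$ (your Step 1) and Theorem \ref{Leray Hirsch for Voevodsky motives} iterated up the tower (your Step 2). The only point you gloss over, and which the paper cites explicitly, is Voevodsky's embedding theorem \cite[Chap.~5, Proposition~2.1.4]{VV}, which is needed to transport the decomposition --- established by de Cataldo--Migliorini in the category of Chow motives with rational coefficients --- into $\textbf{DM}_{gm}(k)\otimes\BQ$.
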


\begin{proof}
This follows from the motivic version of the \emph{decomposition theorem} \cite[Theorem. 2.3.8]{C-M}, Theorem \ref{Leray Hirsch for Voevodsky motives} and the embedding theorem \cite[Chap. 5, Proposition 2.1.4]{VV}.
\end{proof}

\begin{remark}\label{Corti_Hanamura_Dec}
Assuming Grothendieck's standard conjectures and Murre's conjecture, see \cite[Paragraph 3.6]{C-H}, Corti and Hanamura
prove that the decomposition theorem holds in the category of relative Chow motives $CHM_{S,\BQ}$ with rational coefficients. In this case one may drop the semismallness from the hypotheses of the above corollary. Notice that when $S$ is proper, then $CHM_{S,\BQ}$ maps to rational Chow motives.
\end{remark}

Let $\BG$ be a connected reductive algebraic group over the local field $K := k((z))$
of Laurent series with algebraically closed residue field $k$ and ring of integers $\CO_K = k\dbl z \dbr$. Recall that to such a group $\BG$ over a local field $K$, Bruhat and Tits \cite{B-T} associate a 
building $\CB(\BG)$.  Moreover any maximal split torus $S$ defines an apartment $\CA := \CA(\BG, S)$ which is called the reduced apartment of $\BG$ associated with $S$. For any $F \in \CA(\BG,S)$ we let $P_F$ denote the corresponding \emph{parahoric} group scheme and $\CF\ell_{P_F}$ the associated \emph{twisted affine flag variety}. For an element $\omega$ of the Iwahori-Weyl group $\wt{W}$, we let $S(\omega)$ denote the associated affine Schubert variety in $\CF\ell_{P_F}$; see \cite[Sec. 8]{P-R}.\\


\begin{proof}{of Proposition \ref{PropMotiveAffSchubertVars}.}
The source of the Bott-Samelson-Demazure resolution $\Sigma(\omega) \to S(\omega)$, constructed in \cite[Corollary~3.5]{Ri}, is an iterated extension of projective homogeneous varieties \cite[Remark~2.9]{Ri}. Therefore by Theorem \ref{Leray Hirsch for Voevodsky motives} the motive $M(\Sigma(\omega))$ is pure Tate. Note that generalized flag varieties satisfy Poincar\'e duality; see \cite{Ko}. Now the proposition follows from Ng\^o-Polo\cite[Lemma 9.3]{Ngo-Polo}, Theorem \ref{toweres} and Remark \ref{Corti_Hanamura_Dec}.  Notice that the affine Schubert variety $S(\omega)$ is projective and therefore the splitting of the corresponding motive in the category of relative Chow motives, gives the splitting in the category of rational Chow motives and hence in $\textbf{DM}_{gm}^{eff}(k)$ by \cite[Chap. 5, Proposition 2.1.4]{VV}. 
\end{proof}

\begin{remark}
One can explicitly compute the summands that may appear in the decomposition of the motive associated with $S(\omega)$ according to Remark~\ref{PureTateDec},  \cite[Theorem~2.3.8]{C-M}, Remark \ref{Corti_Hanamura_Dec} and finally \cite[Remark~2.9]{Ri}. 
\end{remark}
%
%

\section{Motive of a G-bundle} \label{SectMotiveofG-bundles}

In this section we study motives of G-bundles over a base scheme which is (geometrically) cellular, or
more generally, over a base scheme which is (geometrically) mixed Tate. Let us first recall the following result of A. Huber and B. Kahn \cite[Proposition 5.3]{H-K}.\\

\begin{proposition}\label{geometricmixedtate}

An object $M \in \textbf{DM}_{gm}^{eff}(k)$ is geometrically mixed Tate if and only if there is a finite
separable extension $E$ of $k$ such that the restriction of $M$ to $\textbf{DM}_{gm}^{eff}(E)$
is mixed Tate.

\end{proposition}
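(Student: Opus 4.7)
The ``if'' direction is straightforward: base change along $E \to k^{\alg}$ carries the generators $\BZ(0), \BZ(1)$ to their analogues, and it commutes with shifts, direct sums, cones, and summands; hence it sends $\textbf{MT}(E)$ into $\textbf{MT}(k^{\alg})$, so $M|_E \in \textbf{MT}(E)$ implies $M|_{k^{\alg}} = (M|_E)|_{k^{\alg}} \in \textbf{MT}(k^{\alg})$.

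For the ``only if'' direction my plan is to exploit the \emph{continuity} of $\textbf{DM}_{gm}^{eff}$ along the filtered system of finite separable subextensions $E/k$ of $k^{\alg}=k^{\sep}$ (recall $k$ is perfect). The decisive input is the identification
\[
\Hom_{\textbf{DM}_{gm}^{eff}(k^{\alg})}(A|_{k^{\alg}}, B|_{k^{\alg}}) \;=\; \underset{E/k}{\mathrm{colim}}\,\Hom_{\textbf{DM}_{gm}^{eff}(E)}(A|_{E}, B|_{E})
\]
for geometric motives $A, B$ already defined over some finite $E_0/k$, which furnishes the bridge allowing data living over $k^{\alg}$ to be descended to a finite separable level.

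Granting continuity I would proceed in three steps. First, since $M|_{k^{\alg}}$ lies in the thick subcategory $\textbf{MT}(k^{\alg})$ generated by $\BZ(0)$ and $\BZ(1)$, one can write $M|_{k^{\alg}} \cong \im(e^{\alg})$, where $e^{\alg} \in \End(N^{\alg})$ is an idempotent on a finite iterated cone $N^{\alg}$ of Tate objects $\BZ(p_i)[q_i]$. All the structure morphisms (the connecting maps defining $N^{\alg}$ and the idempotent $e^{\alg}$) are elements of $\Hom$-groups between Tate objects over $k^{\alg}$, so by continuity they descend to a common finite $E_1/k$, producing $N \in \textbf{MT}(E_1)$ and $e \in \End(N)$ whose base changes to $k^{\alg}$ recover $N^{\alg}$ and $e^{\alg}$. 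Pseudo-abelianness of $\textbf{DM}_{gm}^{eff}$ then yields $N' := \im(e) \in \textbf{MT}(E_1)$ with $N'|_{k^{\alg}} \cong M|_{k^{\alg}}$. Second, using the colimit formula again, lift the chosen isomorphism $M|_{k^{\alg}} \cong N'|_{k^{\alg}}$ to an actual morphism $f : M|_{E_2} \to N'|_{E_2}$ over some finite $E_2 \supset E_1$. Third, apply continuity to $C := \mathrm{cone}(f)$: one has $\mathrm{id}_{C|_{k^{\alg}}} = 0$ in $\End(C|_{k^{\alg}})$, so the colimit formula provides a finite $E \supset E_2$ at which $\mathrm{id}_{C|_E} = 0$, i.e.\ $C|_E = 0$; hence $f|_E$ is an isomorphism and $M|_E \cong N'|_E$ lies in $\textbf{MT}(E)$.

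I expect the main obstacle to be securing the continuity identification itself, namely the compatibility of Voevodsky's $\textbf{DM}_{gm}^{eff}(-)$ with the filtered colimit of perfect fields $k^{\alg} = \mathrm{colim}_E E$. This is a standard but nontrivial property, ultimately traceable to the behaviour of finite correspondences between smooth schemes under base change. Once it is in hand, the remaining work is essentially bookkeeping: presenting $M|_{k^{\alg}}$ via cones and a single splitting idempotent, splitting that idempotent (pseudo-abelianness), and promoting ``isomorphism over $k^{\alg}$'' to ``isomorphism over a finite $E$'' via the cone trick above.
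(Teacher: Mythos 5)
Your proposal is correct, but there is nothing in the paper to compare it against: the paper does not prove this proposition at all, it simply recalls it from Huber--Kahn and cites \cite[Proposition 5.3]{H-K}. What you have written is, in substance, the standard proof of that cited result, so you have reconstructed the argument the paper outsources. The one genuinely nontrivial input is exactly the one you flag, namely continuity: for $A,B$ defined over a finite subextension, $\Hom_{\textbf{DM}_{gm}^{eff}(k^{\alg})}(A|_{k^{\alg}},B|_{k^{\alg}})\cong \mathrm{colim}_{E}\Hom_{\textbf{DM}_{gm}^{eff}(E)}(A|_{E},B|_{E})$, the colimit running over finite subextensions of $k^{\sep}=k^{\alg}$ (all again perfect, since $k$ is). This is a citable theorem rather than a formality: smooth schemes of finite type and finite correspondences over $k^{\alg}$ descend to a finite level, so $K^b(\mathrm{SmCor}(k^{\alg}))$ is the filtered $2$-colimit of the $K^b(\mathrm{SmCor}(E))$, and Verdier localization and idempotent completion commute with filtered $2$-colimits; a reference (e.g.\ Cisinski--D\'eglise on continuity, or the statement implicitly used in \cite{H-K}) is needed here. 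Granting it, your three steps are sound: every object of the thick subcategory generated by Tate objects is a direct summand of a finite iterated cone of Tate twists (the class of such summands is itself thick, by the usual argument with $\mathrm{cone}(f\oplus 0)$), the finitely many structure maps and the idempotent descend to a common finite level, idempotents split in $\textbf{DM}_{gm}^{eff}$ because it is by construction a pseudo-abelian envelope, and the cone trick correctly upgrades an isomorphism over $k^{\alg}$ to one over a finite $E$, since a map with vanishing cone is an isomorphism. Two bookkeeping points you elide, both harmless and both handled by the same colimit formula: the descended endomorphism need only become idempotent after a further finite enlargement (the relation $e^2-e=0$ holds in the colimit, hence at some finite level), and $N'|_{k^{\alg}}\cong M|_{k^{\alg}}$ uses that base change, being additive and exact, preserves splittings of idempotents.
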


\begin{definition} \label{DefMixedArrangement}
Let $X \in \cO b(\textbf{Sch}_k)$. We say that $X$ is \emph{mixed Tate} if the associated motive $M^c(X)$ lies in the subcategory $\textbf{MT}(k)$ of mixed Tate motives. Let $\{ X_i\}_{i=1}^n$ be the set of irreducible components of  $X$. We call $X$ a \textit{configuration of mixed Tate varieties} if
\begin{enumerate}
\item[i)] $X_i$ is mixed Tate for $1 \leq i \leq n$, and
\item[ii)] the union of the elements of any arbitrary subset of $\{X_{ij} := X_i \cap X_j\}_{i \neq j}$ is either a configuration of mixed Tate varieties or empty.
\end{enumerate} 
\end{definition}

\begin{lemma}\label{LemMixedArrangement}
The motive of a configuration of mixed Tate varieties is mixed Tate.
\end{lemma}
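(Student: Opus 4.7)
The plan is to induct on the lexicographic pair $(\dim X, n)$, where $n$ is the number of irreducible components of $X$, using the Mayer--Vietoris distinguished triangle
$$
M^c(Y\cap Z)\longrightarrow M^c(Y)\oplus M^c(Z)\longrightarrow M^c(X)\longrightarrow M^c(Y\cap Z)[1]
$$
attached to a closed cover $X = Y\cup Z$. This triangle is available in $\textbf{DM}_{gm}^{eff}(k)$ under either of the hypotheses collected in the Caution of Section \ref{Notation and Conventions}: it follows formally from two applications of the excision triangle for $M^c$ to the pairs $(X,Y)$ and $(Z, Y\cap Z)$, whose open complements coincide. The recursive shape of Definition \ref{DefMixedArrangement} is tailor-made for such an induction, since peeling off one irreducible component shrinks either the component count of the remaining union or the dimension of the relevant intersection.

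For the base cases, if $n = 1$ then $X = X_1$ is mixed Tate by condition (i) of Definition \ref{DefMixedArrangement}; if $\dim X = 0$ then each $X_i$ is a point, all pairwise intersections $X_{ij}$ are empty, and $M^c(X) = \bigoplus_i M^c(X_i)$ lies in $\textbf{MT}(k)$ by (i). For the inductive step, assume $n\ge 2$ and $\dim X > 0$, and set $Y := X_1\cup\cdots\cup X_{n-1}$ and $Z := X_n$, so that $X = Y\cup Z$.

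It then suffices to verify that each of the three other terms in the Mayer--Vietoris triangle is mixed Tate. First, $M^c(Z) = M^c(X_n)$ is mixed Tate by (i). Second, $Y$ has irreducible components $X_1,\dots,X_{n-1}$ and inherits both (i) and (ii) from $X$, because any subfamily of $\{X_{ij}\}_{i\neq j,\, i,j<n}$ is already a subfamily of the ambient $\{X_{ij}\}_{i\neq j}$; hence $Y$ is itself a configuration of mixed Tate varieties with $n-1$ components and dimension $\le \dim X$, so the inductive hypothesis applied at strictly smaller $n$ yields $M^c(Y)\in \textbf{MT}(k)$. Third, $Y\cap Z = \bigcup_{i<n}(X_i\cap X_n)$ is, by condition (ii) applied to $X$, either empty or a configuration of mixed Tate varieties; because the $X_i$ are the distinct irreducible components of $X$ we have $X_i\cap X_n\subsetneq X_n$, whence $\dim(Y\cap Z) < \dim X_n \le \dim X$, and the inductive hypothesis applied at strictly smaller dimension gives $M^c(Y\cap Z)\in\textbf{MT}(k)$. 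Since $\textbf{MT}(k)$ is a thick subcategory of $\textbf{DM}_{gm}^{eff}(k)$ stable under extensions, the Mayer--Vietoris triangle then forces $M^c(X)\in \textbf{MT}(k)$.

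The only delicate point I would have to flag is ensuring strict descent of the complexity $(\dim X, n)$: the strict inequality $\dim(Y\cap Z)<\dim X$ rests precisely on the fact that the $X_i$ are the distinct irreducible components of $X$, so that $X_i\not\supseteq X_n$ for $i<n$. Once this is noted, the argument is essentially formal, relying only on the Mayer--Vietoris triangle for $M^c$ and the thickness of the mixed Tate subcategory; no further geometric input is needed.
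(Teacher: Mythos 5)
Your proof is correct, but it takes a genuinely different route from the paper's. The paper inducts on dimension alone and uses one application of the localization triangle for $M^c$: it removes the closed subscheme $\bigcup_{i\neq j}X_{ij}$ (the union of \emph{all} pairwise intersections) from $X$, observes that the open complement splits as the disjoint union $\bigsqcup_{i}\bigl(X_i\setminus\bigcup_{i\neq j}X_{ij}\bigr)$ so that its $M^c$ is a direct sum, and then handles each summand by a second localization triangle against $M^c(X_i)$; condition (ii) applied to the full family of intersections, together with the dimension induction, makes all closed terms mixed Tate. You instead peel off one component at a time, $Y=X_1\cup\dots\cup X_{n-1}$, $Z=X_n$, via the closed Mayer--Vietoris triangle $M^c(Y\cap Z)\to M^c(Y)\oplus M^c(Z)\to M^c(X)$, running a lexicographic induction on $(\dim X,n)$. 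What the paper's decomposition buys is economy of tools: only the bare localization triangle and additivity of $M^c$ over disjoint unions are needed, with no gluing of triangles. What yours buys: you only ever invoke the weaker instance of condition (ii) for the subfamily $\{X_{in}\}_{i<n}$ rather than the full family, and you make explicit two points the paper leaves implicit --- that $Y$ inherits the configuration structure from $X$, and that the strict dimension drop $\dim(Y\cap Z)<\dim X_n\le\dim X$ holds because distinct irreducible components cannot contain one another. The price is that the closed Mayer--Vietoris triangle itself requires the justification you sketch (two localization triangles with identical open complements assembled by the octahedral axiom, equivalently cdh-descent for $M^c$); this is standard but should be cited, e.g.\ from \cite[Chap.~5, Sec.~4.1]{VV}, since the compatibility of the two triangles under the closed immersion $Z\hookrightarrow X$ is what makes the square homotopy cartesian. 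Both arguments conclude identically, by thickness of $\textbf{MT}(k)$.
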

\begin{proof}
We prove by induction on the dimension $r$ of the mixed Tate configuration. The statement is obvious for $r=0$. Suppose that the lemma holds for all mixed Tate configurations of dimension $r<m$. Let $X=X_1 \cup \dots \cup X_n$ be a configuration of mixed Tate varieties of dimension $m$, here $X_i$ denote an irreducible component of $X$. For inclusion $\bigcup_{i \neq j}X_{ij} \subset \bigcup_{i=1}^n X_i$, we have the following induced localization distinguished triangle 
$$
M^c(\bigcup_{i \neq j}X_{ij})\rightarrow M^c(X_1 \cup...\cup X_n) \rightarrow M^c(\bigcup_{i=1}^n X_i \setminus\bigcup_{i \neq j}X_{ij}).
$$
By the induction assumption, $ M^c(\bigcup_{i \neq j}X_{ij})$ is mixed Tate. On the other hand we have:
$$
M^c(\bigcup_{i=1}^n (X_i\setminus\bigcup_{i \neq j}X_{ij}))= \bigoplus_{i=1}^n M^c(X_i\setminus\bigcup_{i \neq j}X_{ij}).
$$
It only remains to show that for every $i$, $M^c(X_i\setminus\bigcup_{i \neq j}X_{ij})$ is mixed Tate. To see this, consider the following distinguished triangle
$$
M^c(\bigcup_{i \neq j}X_{ij}) \rightarrow M^c(X_i) \rightarrow M^c(X_i\setminus\bigcup_{i \neq j}X_{ij}).
$$
Notice that $M^c(\bigcup_{i \neq j}X_{ij})$ is mixed Tate by induction hypothesis. 
\end{proof}

In the sequel, we use the theory of \emph{wonderful compactification} of semi-simple algebraic groups of adjoint type to relate motive of a $G$-bundle to the motives associated to certain cellular fiber bundles.\\

\begin{remark} \emph{Wonderful Compactification} In \cite{Con-Pro} De Concini and Procesi have introduced the wonderful compactification of a symmetric space. In particular their method produces a smooth canonic compactification $\overline{G}$ of an algebraic group $G$ of adjoint type. Note that in \cite{Con-Pro} they study the case that the group $G$ is defined over $\mathbb{C}$. Nevertheless most of the theory carries over for any algebraically closed field of arbitrary characteristic. However there are some subtleties in positive characteristic which we mention bellow, see Proposition \ref{TheoWnderfulCompProperties}\ref{bij}.
\newline 
As a feature of this compactification, there is a natural $G \times G$-action on $\overline{G}$, and the arrangement of the orbits can be explained by the associated weight polytope. Let us briefly recall some facts about the construction of $\overline{G}$ and the geometry of its $G \times G$-orbits and their closure.

Let $\rho_{\lambda}:G \rightarrow GL(V_{\lambda})$ be an irreducible faithful representation of $G$ with strictly dominant highest weight $\lambda$. One defines the compactification $X_{\lambda}$ of $G$ as the closure $\overline{\mathbb{P}(\rho_{\lambda}(G))}$ (in $\BP(End(V_\lambda))$) of the projectivization $\mathbb{P}(\rho_{\lambda}(G))$ of $\rho_{\lambda}(G)$.\\
It is proved in \cite{DeConciniProcesi} that when $G$ is of adjoint type, $X_{\lambda}$ is smooth and independent of the choice of the highest weight. In this case the resulting compactification is called \emph{wonderful compactification} and we denote it by $\overline{G}$.  
\end{remark}

The following proposition explains the geometry of the wonderful compactification of $G$ and the closures of its $G \times G$-orbits. Furthermore, it provides an effective method to compute their cohomologies.\\ 
Before stating this proposition, let us fix the following notation. Consider the correspondence between polytopes and fans, which associates to a polytope its normal fan. We let $\CP_C$ denote the polytope associated to the fan of Weyl chamber and its faces.

\begin{proposition}\label{TheoWnderfulCompProperties} 
Keep the above notation, we have the following statements:

\begin{enumerate}
\item\label{polytop}
There is a one-to-one correspondence between the $G \times G$-orbits of $\overline{G}$ and the orbits of the action of the Weyl group on the faces of the polytope $\CP_C$, which preserves the incidence relation among orbits (i.e. for two faces $\cF_1$ $\cF_2$ of the polytope $\CP_C$, if $\cF_1 \subseteq \cF_2$ then the orbit corresponding to the face $\cF_1$ is contained in the closure of the orbit corresponding to $\cF_2$).   

\item\label{bij} 
Let $I \subset \Delta$ and $\mathcal{F}=\mathcal{F}_I$ the associated face of $\CP_C$. Let $D_{\mathcal{F}}$ denote the closure of the orbit corresponding to the face $\mathcal{F}$. Then $D_{\mathcal{F}}=\sqcup_{\alpha\in W\times W}C_{\mathcal{F},\alpha}$, such that for each $\alpha:=(u,v)$ there is a bijective morphism
\begin{equation}\label{bijection}
\mathbb{A}^{n_{{\mathcal{F}},\alpha}}\rightarrow C_{\mathcal{F},\alpha}
\end{equation}
where $n_{\mathcal{F},\alpha}=l(w_0)-l(u)+\mid I\cap I_u\mid +l(v)$ and $w_0$ denotes the longest element of the Weyl group. In particular when $\charakt k=0$ (resp. $\charakt k> 0$) $D_{\mathcal{F}}$ is cellular (resp. motivic cellular).
\item\label{normalcrossing}
$\overline{G} \setminus G$ is a normal crossing divisor, and its irreducible components form a mixed Tate configuration.   
\end{enumerate}
\end{proposition}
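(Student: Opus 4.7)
The plan is to derive all three parts from the structure theory of the wonderful compactification $\overline{G}$ worked out in \cite{DeConciniProcesi} and \cite{Renner}, adding only what is needed to obtain the motivic cellularity statement in positive characteristic and the conclusion about mixed Tate configurations.

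For part \ref{polytop}, I would first recall the toric description of $\overline{G}$: the closure $\overline{T}$ of the maximal torus $T\subset G$ inside $\overline{G}$ is the toric variety of the fan of the Weyl chambers, whose associated polytope is $\mathcal{P}_C$. The $T\times T$-orbits on $\overline{T}$ are in standard bijection with the faces of $\mathcal{P}_C$. Since every $G\times G$-orbit of $\overline{G}$ meets $\overline{T}$ and the intersection is a single $W$-orbit of $T\times T$-orbits (this is Bruhat decomposition applied fibrewise), pushing down to $W$-orbits of faces yields the required bijection. The incidence relation is transported along the bijection because orbit closures on $\overline{T}$ are cut out exactly by inclusions of faces.

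For part \ref{bij}, I would reduce the description of $D_{\mathcal{F}}$ to a cell decomposition using a fixed Borel subgroup $B$. Each $G\times G$-orbit $\mathcal{O}_{\mathcal{F}}$ is a homogeneous space $(G\times G)/H_{\mathcal{F}}$ whose stabilizer $H_{\mathcal{F}}$ projects onto opposite parabolics $P_I^{-}$ and $P_I$, where $I\subset\Delta$ is the subset of simple roots encoding $\mathcal{F}$. Thus $\mathcal{O}_{\mathcal{F}}$ fibres over $G/P_I^{-}\times G/P_I$ with an affine fibre coming from the unipotent/torus part of $H_{\mathcal{F}}$. Pulling back the Bruhat cells on the base and combining with the fibre produces the disjoint decomposition $D_{\mathcal{F}}=\sqcup_{\alpha=(u,v)}C_{\mathcal{F},\alpha}$; the dimension count $l(w_0)-l(u)+|I\cap I_u|+l(v)$ is then bookkeeping of the base Schubert dimensions and the fibre contribution, with the term $|I\cap I_u|$ accounting for roots in $I$ that no longer enter the stabilizer for the coset representative $u$. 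In characteristic zero this parametrization yields a genuine affine cellular structure, while in positive characteristic one only obtains a bijective (possibly purely inseparable) morphism $\mathbb{A}^{n_{\mathcal{F},\alpha}}\to C_{\mathcal{F},\alpha}$; such a morphism still induces an isomorphism on $M^c$, which is exactly why Definition \ref{relativecellular1} was formulated to cover this weaker notion and yields motivic cellularity in this case.

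For part \ref{normalcrossing}, the irreducible components of the boundary $\overline{G}\setminus G$ correspond to the codimension-one faces of $\mathcal{P}_C$, hence are indexed by the simple roots: call them $D_1,\dots,D_r$. The normal crossing property is local and is verified in \cite{DeConciniProcesi} by exhibiting explicit étale coordinates around any boundary point in which the $D_i$ become coordinate hyperplanes. To show that $\{D_i\}$ forms a mixed Tate configuration in the sense of Definition \ref{DefMixedArrangement}, observe that any intersection $D_{i_1}\cap\cdots\cap D_{i_k}$ is a union of orbit closures $D_{\mathcal{F}}$ corresponding to faces $\mathcal{F}$ contained in all the relevant codimension-one faces; each $D_{\mathcal{F}}$ is motivic cellular by part \ref{bij}, hence pure (in particular mixed) Tate by Remark \ref{puretate}. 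An induction on the number of components, using Lemma \ref{LemMixedArrangement} at each step, then gives that the whole configuration is mixed Tate.

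The main obstacle I expect is in part \ref{bij}: keeping the indexing $\alpha=(u,v)\in W\times W$ honest (identifying the correct minimal coset representatives so the $C_{\mathcal{F},\alpha}$ are truly disjoint), justifying the asymmetric correction term $|I\cap I_u|$ from the geometry of the stabilizer $H_{\mathcal{F}}$, and being careful in positive characteristic where one must distinguish Białynicki-Birula-style bijections from isomorphisms and invoke Definition \ref{relativecellular1} rather than the stricter geometric cellularity of \cite{CGM, Kar}.
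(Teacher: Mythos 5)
Your treatments of parts \ref{polytop} and \ref{normalcrossing} agree in substance with the paper, which simply quotes \cite[Proposition 8]{Tima} for \ref{polytop} and deduces \ref{normalcrossing} from \ref{polytop}, \ref{bij}, Remark \ref{puretate} and Lemma \ref{LemMixedArrangement}. The genuine gap is in your part \ref{bij}. The $G\times G$-orbit $\cO_\cF$ attached to $I\subset\Delta$ does fibre equivariantly over $G/P_I^-\times G/P_I$, but the fibre is \emph{not} affine: it is the adjoint Levi group $L_I^{\ad}=L_I/Z(L_I)$ (only the unipotent radicals and the central torus of the Levi are absorbed into the stabilizer $H_\cF$; the semisimple part survives). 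Already for $|I|=1$ the fibre is $\PGL_2$, whose Chow ring has $2$-torsion, so it is not cellular, and ``pulling back the Bruhat cells on the base and combining with the fibre'' cannot produce a decomposition into affine cells. Likewise the orbit closure $D_\cF$ fibres over $G/P_I^-\times G/P_I$ with fibre the wonderful compactification $\ol{L_I^{\ad}}$ of a smaller adjoint group, so your scheme would at best require the full statement for groups of smaller rank --- an induction you never set up --- and even then the asymmetric correction term $|I\cap I_u|$ does not fall out of a product of Schubert-cell dimensions; it comes from Renner's analysis of how $B\times B$-orbits meet the boundary. This is exactly why the paper does not reprove the decomposition: it invokes Renner's theorem \cite{Renner} as a black box for the existence of the decomposition and of the bijective morphisms \eqref{bijection}.

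A second, smaller gap concerns the passage from the bijections \eqref{bijection} to (motivic) cellularity. In characteristic zero, bijectivity alone is not enough; the paper upgrades the bijective morphisms to isomorphisms via Zariski's main theorem, which you omit. In positive characteristic, your assertion that such a bijective morphism ``still induces an isomorphism on $M^c$'' is precisely the nontrivial point, and it cannot be discharged by saying that Definition \ref{relativecellular1} was formulated to cover this case --- the hypothesis of that definition (that the induced map is an isomorphism in $DM_{gm}^{eff}(k)$) still has to be verified. The paper does this by observing that the maps in question are universal topological homeomorphisms and citing \cite[Proposition 3.2.5]{VVII}, i.e.\ that such morphisms induce isomorphisms of the associated h-sheaves; without an input of this kind your argument asserts the conclusion rather than proving it. (A minor inaccuracy in your part \ref{normalcrossing}: the intersections $D_{i_1}\cap\dots\cap D_{i_k}$ of boundary divisors are themselves irreducible orbit closures $D_\cF$, not merely unions of such.)
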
 

\begin{proof}
For the proof of \ref{polytop} we refer to \cite[Proposition 8]{Tima}. The existence of the decomposition $D_{\mathcal{F}}=\sqcup_{\alpha\in W\times W}C_{\mathcal{F},\alpha}$ and bijective morphism \ref{bijection} is the main result of Renner in \cite{Renner}. The fact that $D_{\cF}$ is cellular in characteristic zero follows from Zariski main theorem. Finally the assertion that $D_{\cF}$ is motivic cellular in positive characteristic follows from the fact that any universal topological homeomorphism induces isomorphism of the associated h-sheaves, see \cite[Proposition 3.2.5]{VVII}. Finally \ref{normalcrossing} follows from \ref{polytop}, \ref{bij} and Remark \ref{puretate}.
\end{proof}

\begin{remark}\label{WhyMotivicCellular}
When $\charakt k > 0$ then we don't know whether the bijection \ref{bijection} is an isomorphism or not and thus $D_\cF$ might not fit the definition \ref{MotivewcsofCellularVariety} of cellular varieties. Nevertheless, as we have seen above, the variety $D_\CF$ is \emph{motivic cellular} and hence enjoys a similar decomposition; see Proposition \ref{freechow} and remark \ref{relativecellular2}. 
\end{remark}

\begin{proposition}\label{motive-of-G-bundle1}
Assume that $\charakt k=0$. Let $G$ be a connected reductive group over $k$ with connected center $Z(G)$. Let $\cG$ be a $G$-bundle over an irreducible variety $X\in \CO b(\textbf{Sm}_k)$. Suppose that $\CG$ is locally trivial for Zariski topology and $X$ is geometrically mixed Tate, then $M(\cG)$ is also geometrically mixed Tate.
\end{proposition}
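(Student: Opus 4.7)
The plan is to combine the motivic Leray--Hirsch theorem (Theorem \ref{Leray Hirsch for Voevodsky motives}) with the wonderful compactification of the adjoint quotient of $G$, and then to analyse the boundary stratification by localization and inclusion--exclusion. First I would reduce to the case where $G$ is semisimple of adjoint type: since $Z(G)$ is connected, it is a torus which, after a finite separable base change allowed by Proposition \ref{geometricmixedtate}, may be taken to be split, $Z(G)\cong \mathbb{G}_m^r$. The exact sequence $1\to Z(G)\to G\to G^{ad}\to 1$ realises $\cG\to \cG/Z(G)$ as a Zariski locally trivial $Z(G)$-torsor over the Zariski locally trivial $G^{ad}$-bundle $\cG/Z(G)\to X$, and iterated Gysin triangles for the line bundles corresponding to the $\mathbb{G}_m$-factors of $Z(G)$ show that $M(\cG)$ is geometrically mixed Tate provided $M(\cG/Z(G))$ is. Hence we may assume $G=G^{ad}$.

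Let $\overline{G}$ denote the wonderful compactification of $G$, viewed as a left $G$-variety through the first factor of its $G\times G$-action, and set $\overline{\cG}:=\cG\times^G \overline{G}$. This is a Zariski locally trivial fibration over $X$ with smooth projective cellular fibre $\overline{G}$ (Proposition \ref{TheoWnderfulCompProperties}\ref{bij}), so Theorem \ref{Leray Hirsch for Voevodsky motives} applies and yields
$$ M(\overline{\cG})\;\cong\; \bigoplus_{p\ge 0} CH_p(\overline{G})\otimes M(X)(p)[2p], $$
which is geometrically mixed Tate. Writing $\partial\overline{\cG}:=\cG\times^G(\overline{G}\setminus G)$, the localization triangle
$$ M^c(\partial\overline{\cG}) \longrightarrow M^c(\overline{\cG}) \longrightarrow M^c(\cG), $$
together with duality for the smooth varieties $\overline{\cG}$ and $\cG$, reduces the problem to showing that $M^c(\partial\overline{\cG})$ is geometrically mixed Tate.

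By Proposition \ref{TheoWnderfulCompProperties}\ref{normalcrossing}, the irreducible components $\{D_\alpha\}_{\alpha\in\Delta}$ of $\overline{G}\setminus G$ and all their intersections $D_I:=\bigcap_{\alpha\in I}D_\alpha$ are $G\times G$-orbit closures that are cellular and form a mixed Tate configuration (Lemma \ref{LemMixedArrangement}). The analogous configuration on $\overline{\cG}$ consists of the associated bundles $\cG\times^G D_I$, and an iterated \v{C}ech/inclusion--exclusion argument presents $M^c(\partial\overline{\cG})$ as a successive extension of the motives $M^c(\cG\times^G D_I)$. It therefore suffices to show that each such piece is geometrically mixed Tate. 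The fibre $D_I$ is cellular, so $M^c(D_I)$ is pure Tate by Proposition \ref{MotivewcsofCellularVariety}, and the bundle $\cG\times^G D_I\to X$ is Zariski locally trivial; a Mayer--Vietoris argument on a trivializing open cover of $X$, analogous to the final step in the proof of Theorem \ref{Leray Hirsch for Voevodsky motives} but using K\"unneth with compact supports (which does not require smoothness of the fibre), then concludes.

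The main obstacle is precisely this last Mayer--Vietoris step: since the orbit closures $D_I$ are typically singular, Theorem \ref{Leray Hirsch for Voevodsky motives} does not apply verbatim, and one must either re-run its Mayer--Vietoris proof in the category of motives with compact supports, or choose a $G\times G$-equivariant resolution of $D_I$ and appeal to the motivic decomposition theorem in the form of Remark \ref{Corti_Hanamura_Dec}. Everything else is a bookkeeping exercise in localization and inclusion--exclusion inside $\textbf{DM}_{gm}^{eff}(k^{\mathrm{alg}})$.
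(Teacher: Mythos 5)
Your proposal follows essentially the same strategy as the paper: compactify fibrewise by the wonderful compactification, apply the motivic Leray--Hirsch theorem to $\overline{\cG}$, reduce via localization/duality to the boundary, and analyse the boundary through the $G\times G$-orbit closures and the configuration Lemma \ref{LemMixedArrangement}. The one genuinely different step is your treatment of the centre: you reduce to $G^{ad}$ \emph{first}, by splitting the Zariski locally trivial $Z(G)$-torsor $\cG\to\cG/Z(G)$ into line bundles and running iterated Gysin triangles. This is valid and arguably more elementary than what the paper does; there the adjoint case is proved first and the centre is handled afterwards, by embedding $\cG$ into the toric fibration $\cG\times^{Z}\overline{Z}$ over the $G^{ad}$-bundle $\cG'$ and repeating the compactification-plus-configuration argument.

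However, the ``main obstacle'' you single out at the end rests on a misconception. In the wonderful compactification of an adjoint semisimple group the boundary is a \emph{smooth} normal crossing divisor and every $G\times G$-orbit closure $D_I=\bigcap_{\alpha\in I}D_\alpha$ is a smooth projective variety (De Concini--Procesi); this is exactly what lets the paper feed the cell decompositions of Proposition \ref{TheoWnderfulCompProperties} into Theorem \ref{Leray Hirsch for Voevodsky motives}, applied verbatim to the smooth proper Zariski locally trivial fibrations $\cG\times^{G}D_I\to X$, and then conclude with Lemma \ref{LemMixedArrangement}. So no workaround is needed -- and this matters, because both workarounds you propose are the weakest part of your argument: the Leray--Hirsch proof builds its global comparison map from classes in $CH^{*}(\Gamma)$ of a \emph{smooth} total space, so it does not simply ``re-run'' for $M^{c}$ of a fibration with singular fibres (K\"unneth and Mayer--Vietoris alone do not produce the global map you would need to check locally), while appealing to Remark \ref{Corti_Hanamura_Dec} would make the unconditional statement conditional on the conjectures of Grothendieck and Murre. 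Once you use smoothness of the $D_I$, your boundary analysis collapses to the paper's, and your proof is complete.
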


\begin{proof}
We may assume that the base field $k$ is algebrically closed.\\
Let us first assume that $G$ is a semisimple group of adjoint type. 
 Then $G$ admits a wonderful compactification $\overline{G}$ which is smooth. By construction, there is $(G \times G)$-action on $\overline{G}$. Consider the $\overline{G}$-fibration  $\overline{\mathcal{G}}:= \CG \times ^G\overline{G}$ over $X$ (here $G$ acts on $\overline{G}$ via the embedding $G\hookrightarrow G\times e\subseteq G\times_k G$). Consider the following generalized Gysin distinguished triangle 
$$
M(\mathcal{G}) \rightarrow M(\overline{\mathcal{G}}) \rightarrow M^c(\overline{\mathcal{G}}  \setminus \mathcal{G})^\ast(n)[2n] \rightarrow M(\mathcal{G})[1]
$$
\cite[page 197]{VV}, corresponding to the open immersion $\mathcal{G} \hookrightarrow \overline{\mathcal{G}}$. Here $n:= \dim G+ \dim X$.\\
By Proposition \ref{TheoWnderfulCompProperties}, $\overline{G}$ admits a cell decomposition and therefore by Theorem \ref{Leray Hirsch for Voevodsky motives}, $M^c(\overline{\mathcal{G}})$ is mixed Tate. Hence it suffices to show that $M^c({\overline{\mathcal{G}} \setminus \mathcal{G}})$ is mixed Tate.\\
Let us now look at the geometry of the closures of $(G \times G)$-orbits. As we mentioned in Proposition \ref{TheoWnderfulCompProperties} a), these orbit closures are indexed by a subset of  faces of Weyl chamber, in such a way that the incidence relation between faces gets preserved. Note that by Proposition \ref{TheoWnderfulCompProperties} b) the closure of these orbits also admit a cell decomposition. Thus by Theorem \ref{Leray Hirsch for Voevodsky motives} the irreducible components of $\overline{\mathcal{G}}\setminus \mathcal{G}$ form a mixed Tate configuration. Now Lemma \ref{LemMixedArrangement} implies that $M^c(\overline{\mathcal{G}}\setminus \mathcal{G})$ is mixed Tate. 
\newline
Now let us assume that $G$ is a reductive with connected center $Z:=Z(G)$. Let $\mathcal{G}'$ denote the $G^{ad}$-bundle associated with $\CG$. As we have shown above the motive $M(\mathcal{G}')$ is mixed Tate.  
Notice that any torus bundle is locally trivial for Zariski topology by Hilbert's theorem 90. Take a toric compactification $\overline{Z}$ of $Z$ and embed $\mathcal{G}$ into $\mathcal{Z} := \mathcal{G} \times^Z \overline{Z}$, which is a toric fibration over $\mathcal{G}'$. The irreducible components of the complement of $\mathcal{G}$ in $\mathcal{Z}$ are toric fibrations over $\mathcal{G}'$. Since fibers are toric (and hence cellular) and $M(\mathcal{G}')$ is mixed Tate, by Theorem \ref{Leray Hirsch for Voevodsky motives} we may argue that these irreducible components form a mixed Tate configuration and hence we may conclude as above.
\end{proof}


In the statement of the Proposition\ref{motive-of-G-bundle1}, the assumption that the $G$-bundle $\CG$ is locally trivial for Zariski topology is restrictive; nevertheless, as we will see below,  when $X$ is geometrically cellular, it turns out that this assumption is not necessary. Before proving this, let us state the following lemma.

\begin{lemma}\label{ReuctiveGroup}
Let $G$ be a connected reductive group over $k$, then the motive associated to $G$ is geometrically mixed Tate.
Furthermore if $G$ is a split reductive group then $M(G)$ is mixed Tate.
\end{lemma}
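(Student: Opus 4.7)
The plan is to first reduce the general statement about geometric mixed Tateness to the split case via Proposition \ref{geometricmixedtate}, and then to deduce the split case directly from the Bruhat decomposition.

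\textbf{Step 1 (Reduction to the split case).} Any connected reductive group $G$ over a perfect field $k$ becomes split over a finite separable (in fact Galois) extension $k'/k$, namely a splitting field for a maximal torus. Since Proposition \ref{geometricmixedtate} characterizes geometric mixed Tateness in terms of mixed Tateness over some finite separable extension, the first claim follows at once from the second.

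\textbf{Step 2 (Bruhat stratification in the split case).} Assume now that $G$ is split, with Borel subgroup $B$, split maximal torus $T \subseteq B$, unipotent radical $U$ of $B$, and Weyl group $W$. The Bruhat decomposition gives a stratification into locally closed subvarieties
$$
G \;=\; \coprod_{w \in W} C_w, \qquad C_w := BwB,
$$
together with the closure relation $\overline{C_w} = \coprod_{v \le w} C_v$ in the Bruhat order. Moreover each cell admits a product decomposition $C_w \cong U_w^- \times B$ where $U_w^- \subseteq U$ is an affine space of dimension $l(w)$; hence as a $k$-variety
$$
C_w \;\cong\; \BA^{l(w)+\dim U} \times T.
$$

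\textbf{Step 3 (Each stratum is mixed Tate).} Since $T \cong \BG_m^{\,r}$ with $r = \dim T$, and $M^c(\BG_m) \cong \BZ[1]\oplus\BZ(1)[2]$ (by the localization triangle of $\{0\} \hookrightarrow \BA^1$), the K\"unneth formula for $M^c$ shows that $M^c(T)$ is mixed Tate. Combined with $M^c(\BA^N) = \BZ(N)[2N]$, K\"unneth again gives that $M^c(C_w)$ is mixed Tate for every $w \in W$.

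\textbf{Step 4 (Induction along the filtration).} Set $G_i := \bigcup_{l(w) \le i} \overline{C_w}$, a closed subvariety of $G$. The closure relations above imply $G_i \setminus G_{i-1} = \coprod_{l(w)=i} C_w$. The localization distinguished triangle
$$
M^c(G_{i-1}) \;\longto\; M^c(G_i) \;\longto\; M^c(G_i \setminus G_{i-1})
$$
together with Step 3 and induction on $i$ yields that $M^c(G) = M^c(G_{l(w_0)})$ is mixed Tate. Since $G$ is smooth, Voevodsky's duality between $M$ and $M^c$ for smooth varieties (a Tate twist and shift) transports mixed Tateness from $M^c(G)$ to $M(G)$, completing the proof.

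\textbf{Main obstacle.} The argument is essentially bookkeeping; the only genuinely nontrivial input is the classical geometry of the Bruhat decomposition (cell structure, closure relations, and the identification $C_w \cong \BA^{l(w)+\dim U} \times T$). Everything else is a straightforward chain of localization triangles and K\"unneth formulas in $\textbf{DM}_{gm}^{eff}(k)$, so I do not expect a serious difficulty beyond being careful that the filtration by $G_i$ is well-defined and that the $T$-factor contributes only mixed Tate pieces, which is guaranteed by the splitness of $T$.
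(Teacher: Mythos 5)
Your proof is correct, but it takes a genuinely different route from the paper's. You stratify $G$ itself by closures of Bruhat cells, compute $M^c$ of each cell $C_w\cong\BA^{l(w)+\dim U}\times T$ by K\"unneth, climb the filtration $G_i=\bigcup_{l(w)\le i}\overline{C_w}$ with localization triangles for $M^c$, and dualize at the end. The paper instead views $G$ as a $T$-bundle over $G/T$, compactifies it fibrewise to the projective bundle $\overline{\mathcal{T}}=G\times^T\BP^r_k$, shows $M(\overline{\mathcal{T}})$ is pure Tate via the projective bundle formula together with the cellularity of $G/T$ (an affine fibration over $G/B$), and then removes the boundary $\overline{\mathcal{T}}\setminus G$ --- a mixed Tate configuration of toric fibrations --- using the generalized Gysin triangle and Lemma \ref{LemMixedArrangement}. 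Your argument is more elementary and self-contained (no compactification, no configuration lemma); the paper's argument, though heavier, is precisely the template that relativizes to $G$-bundles in Proposition \ref{motive-of-G-bundle1} and Section \ref{NestedFiltration}, which is why the paper sets it up this way. Two points you should make explicit to be airtight: (i) the localization triangles for $M^c$ and the duality $M^c(G)\cong M(G)^*(\dim G)[2\dim G]$ both require the paper's standing caution (resolution of singularities or $\BQ$-coefficients) --- the same is true of the paper's own proof, which also invokes $M^c$ of the singular boundary; (ii) in the final duality step, to land back in $\textbf{DM}_{gm}^{eff}(k)$ you need that every Tate twist $\BZ(p)[q]$ occurring in $M^c(G)$ has $p\le\dim G$, so that $M^c(G)^*(\dim G)[2\dim G]$ is again an effective mixed Tate object; your K\"unneth computation on each cell does give this bound (the top twist on $C_w$ is $\dim C_w$), but the sentence "duality transports mixed Tateness" hides this effectivity bookkeeping. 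Your Step 1 matches the paper in spirit: the paper simply passes to $k^{alg}$, where $G$ is automatically split, rather than invoking Proposition \ref{geometricmixedtate} for a finite separable splitting field.
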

\bigskip
\begin{proof}
For the first statement we may assume that $k$ is algebraically closed. Let $T$ be a maximal split torus in $G$ of rank $r$. We view $G$ as a $T$-bundle over $G/T$ under the projection $\pi:G \rightarrow G/T$, and $\BP^r_k$ as a compactification of $T$. Let $\overline{\mathcal{T}} := G \times ^{T}\BP^r_k $ be the associated projective bundle over $G/T$.  By projective bundle formula \cite[Theorem 15.12]{MVW} we have
$$
M(\overline{\mathcal{T}})= M(\BP^r_k ) \otimes M(G/T).
$$
On the other hand $B = T \ltimes U$, where $B$ is a Borel subgroup of $G$ containing $T$ and $U$ is the unipotent  part of $B$. Notice that, as a variety, $U$ is isomorphic to an affine space over $k$. Since the fibration $G \rightarrow G/B$ is the composition of $G \rightarrow G/T$ and $U$-fibration $G/T \rightarrow G/B $, we deduce by Corollary \ref{motive-general-flag-var} that $M(\overline{\mathcal{T}})$ is pure Tate. As in the last paragraph of the proof of Proposition \ref{motive-of-G-bundle1}, one can embed $G$ into $\overline{\mathcal{T}}$ over $G/T$ and verify that the irreducible components of its complement form a mixed Tate configuration. The second part of the lemma is similar, only since $G$ is split, one does not need to pass to an algebraic closure.
\end{proof}

\begin{remark}
In \cite[Section 8]{H-K} A.~Huber and B.Kahn proved that the motive of a split reductive group is mixed Tate. Their proof relies on the filtration on the motive of a \emph{torus bundle}. They produce this filtration using their theory of slice filtration; see also section \ref{NestedFiltration}. One can alternatively use the explicit filtration we establish in section \ref{NestedFiltration}; see also Lemma \ref{NestedFilt} Example \ref{TorusBundle}.
\end{remark}

\begin{proposition}\label{motive-of-G-bundle1b}
Let $G$ be a connected reductive group over $k$. Let $\cG$ be a $G$-bundle over an irreducible variety $X\in \CO b(\textbf{Sm}_k)$. Suppose in addition that $X$ is geometrically cellular. Then $M(\cG)$ is geometrically mixed Tate.
\end{proposition}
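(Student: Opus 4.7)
My plan is to follow the argument of Proposition \ref{motive-of-G-bundle1}, refining two of its inputs so as to drop both the Zariski local triviality hypothesis on $\cG$ and the characteristic zero assumption. First, by Proposition \ref{geometricmixedtate} I may assume $k$ is algebraically closed; then $X$ is cellular in the sense of Definition \ref{relativecellular1} and $G$ is split connected reductive. In this setup the $G$-bundle $\cG$ is automatically Nisnevich locally trivial: for any $x\in X$ the henselization $R=\cO^h_{X,x}$ has algebraically closed residue field $k$, so by Hensel's lemma $H^1_{\et}(\Spec R,G)\cong H^1_{\et}(\Spec k,G)=0$ for $G$ connected linear, and $\cG$ trivialises on a Nisnevich neighbourhood of~$x$.

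Next I observe that Theorem \ref{Leray Hirsch for Voevodsky motives} goes through for Nisnevich locally trivial fibrations with no change in its proof. The only role of the Zariski covering in the given argument is to trivialise the fibration and patch via Mayer--Vietoris; since $\textbf{DM}^{eff}_{gm}(k)$ is built from the Nisnevich topology, the Mayer--Vietoris triangle for Nisnevich elementary distinguished squares provides exactly the same patching, and the proof carries over verbatim.

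With these two refinements I transcribe the proof of Proposition \ref{motive-of-G-bundle1}. For $G$ semisimple of adjoint type, the fibration $\overline{\cG}:=\cG\times^G\overline{G}$ obtained from the wonderful compactification is Nisnevich locally trivial with motivic cellular fibre $\overline{G}$ by Proposition \ref{TheoWnderfulCompProperties}; since $M(X)$ is pure Tate by Proposition \ref{MotivewcsofCellularVariety} and Remark \ref{PureTateDec}, the Nisnevich version of Leray--Hirsch forces $M(\overline{\cG})$ to be mixed Tate. The irreducible components of $\overline{\cG}\setminus\cG$ are Nisnevich locally trivial fibrations over $X$ with fibres the orbit closures $D_{\cF}$, which are themselves motivic cellular, so the same argument together with Lemma \ref{LemMixedArrangement} shows that they form a mixed Tate configuration, and the Gysin triangle concludes the adjoint case. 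For a general connected reductive $G$, the reduction through the adjoint quotient and a toric compactification of the connected component of the center $Z(G)^0$ runs exactly as in Proposition \ref{motive-of-G-bundle1}; any finite component group $\pi_0(Z(G))$ contributes a Nisnevich locally trivial finite cover of the resulting bundle, whose mixed Tateness is preserved by Nisnevich Mayer--Vietoris (with a separate argument in positive characteristic for infinitesimal factors, using invariance of $h$-sheaves under universal homeomorphisms as in Remark \ref{WhyMotivicCellular}).

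The main obstacle I foresee is the careful verification that the proof of Theorem \ref{Leray Hirsch for Voevodsky motives} transfers cleanly to Nisnevich locally trivial fibrations whose fibres are merely motivic cellular rather than cellular in the geometric sense; in particular one has to check that the Poincar\'e duality input used in that proof remains available for the orbit closures $D_{\cF}$ in positive characteristic, where they are only bijective images of affine cells rather than genuine cell decompositions.
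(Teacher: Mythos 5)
Your argument has a fatal gap at its first and most load-bearing step: the claim that $\cG$ is automatically Nisnevich locally trivial. Hensel's lemma gives triviality of $\cG$ over the henselization $\cO^h_{X,x}$ only when the residue field of $x$ is algebraically closed, i.e.\ only at \emph{closed} points of $X$. Nisnevich local triviality requires triviality at \emph{every} point, and at the generic point it forces the generic fibre $\cG_{k(X)}$ to be a trivial $G_{k(X)}$-torsor. That is a genuinely nontrivial statement, not a formal consequence of anything you cite: your reasoning nowhere uses cellularity of $X$, and if it were valid it would prove that every (\'etale locally trivial) $G$-bundle over \emph{any} smooth variety over $k=k^{alg}$ is generically trivial, which is false --- a $PGL_m$-torsor representing a nonzero Brauer class on a smooth projective surface (an Enriques or abelian surface, say) is trivial over the henselizations at all closed points yet is not generically trivial. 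Cellularity of $X$ does rescue the statement, but only through a serious theorem, and that theorem is exactly the one the paper's proof is built on: Raghunathan's theorem \cite{Raghunathan2} that $G$-bundles over affine space over an algebraically closed field are trivial. The paper's proof is quite different from yours and much shorter: it inducts on the cell filtration $\emptyset=X_{-1}\subset X_0\subset\dots\subset X_n=X$; the restriction of $\cG$ to the open cell $U_n\cong\BA^{d_n}_k$ is trivial by Raghunathan, so $M(\cG|_{U_n})$ is mixed Tate by Lemma \ref{ReuctiveGroup} and the K\"unneth formula, $M(\cG|_{X_{n-1}})$ is mixed Tate by induction, and the Gysin triangle relating these to $M(\cG)$ concludes. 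No compactification, no Leray--Hirsch, and no local triviality of $\cG$ over all of $X$ is ever needed.

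Two further points would block your route even if local triviality were granted. First, the assertion that Theorem \ref{Leray Hirsch for Voevodsky motives} extends ``verbatim'' to Nisnevich locally trivial fibrations is not a proof: the paper's argument restricts globally defined classes $\zeta_{i,p}\in CH^\ast(\Gamma)$ to the members of a Zariski cover and patches by Mayer--Vietoris, and with \'etale members of a Nisnevich square both the existence of such global classes and the patching have to be reargued. Second, your statement allows an arbitrary connected reductive $G$, but the wonderful-compactification argument you transcribe from Proposition \ref{motive-of-G-bundle1} requires $Z(G)$ connected; for $G=SL_n$ the map $G\to G^{ad}$ is a $\mu_n$-torsor rather than a torus bundle, and your parenthetical appeal to Nisnevich Mayer--Vietoris and to universal homeomorphisms for ``infinitesimal factors'' is not an argument. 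The paper's induction is insensitive to the center, which is precisely why Proposition \ref{motive-of-G-bundle1b} carries no hypothesis on $Z(G)$ while Proposition \ref{motive-of-G-bundle1} does. (If you wish to salvage your approach: Raghunathan applied to the big cell gives generic triviality, and the Grothendieck--Serre conjecture in equicharacteristic, now a theorem of Fedorov and Panin, then yields Zariski local triviality, after which the argument of Proposition \ref{motive-of-G-bundle1} applies --- but only in characteristic zero and with connected center, and with far heavier input than the paper's induction on cells.)
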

\begin{proof}
We may assume that $k$ is algebraically closed. Let 
$$
\emptyset= X_{-1} \subset X_0 \subset ... \subset X_n=X 
$$
be a cell decomposition for $X$, where $U_i:=X_i \setminus X_{i-1}$ is isomorphic to $\mathbb{A}^{d_i}_k$.
We prove by induction on $n$. Consider the following Gysin distinguished triangle
\begin{equation}\label{FiltOverCellX}
M(\cG|_{U_n}) \rightarrow M(\cG) \rightarrow M(\cG|_{X_{n-1}})(c)[2c],
\end{equation}
where $c$ is the codimension of $X_{n-1}$ in $X$. By Raghunathan's Theorem \cite{Raghunathan2} (the generalization of the well-known conjecture of Serre about triviality of vector bundles over an affine space), the restriction of $\cG$ to $U_n$ is trivial. Therefore $M(\CG|_{U_n})$ is mixed Tate by Lemma \ref{ReuctiveGroup} and K\"unneth theorem \cite[Proposition 4.1.7]{VV}. On the other hand $M(\cG|{X_{n-1}})$ is mixed Tate by induction hypothesis.
\end{proof}

\section{Filtration on the motive of a G-bundle}\label{NestedFiltration}

\bigskip

Recall that in section \ref{SectMotiveofG-bundles} we studied the motive associated to a $G$-bundle over a base scheme $X$ whose motive $M(X)$ is geometrically mixed Tate. In the sequel, we produce a nested filtration (in terms of incidence relations between faces of a convex body) on the motive of a $G$-bundle over a general base scheme.\\ 
Let us first recall the following filtration on the motive of a torus bundle in $\textbf{DM}_{gm}^{eff}(k)$, constructed by A. Huber and B. Kahn; see \cite{H-K} where they produce this filtration as an application of the theory of \emph{slice filtration} and further use it to study motive of a split reductive group \cite[Lem. 9.1]{H-K}. Let us briefly recall their construction. \\

Let $T$ be a split torus of rank $r$ and let $\mathcal{T}$ be a $T$-bundle over a scheme $X \in \CO b(\textbf{Sm}_k)$. Let $\Xi:= Hom(\mathbb{G}_m , T)$ denote the cocharacter group. Then one has the following diagram of distinguished triangles in $\textbf{DM}_{gm}^{eff}(k)$ 
\bigskip

\begin{equation}\label{Huber_Kahn_Filt}
\xymatrix @C=0pc {
\nu^{\geqslant p+1}_X M(\CT) \ar[rrrrrr]& & & & & & \nu^{\geqslant p}_X M(\CT) \ar[dd]& && \nu ^{\geqslant2}_X M(\CT) \ar[rrrrr]& & & && \nu^{\geqslant 1}_X M(\CT) \ar[rrrrrr]\ar[dd]& & && & & M(\CT)\ar[dd]  \\
& & & & &[1] & & & & ... & &  & & [1] & && & & & [1] & \\
 & & & & & & \lambda_p(X,T)\ar[uullllll]& & & &  & & & & \lambda_1(X,T)\ar[uulllll] & & & & & & \lambda_0(X,T)\ar[uullllll]
}
\end{equation}

\bigskip

where $\lambda_p(X,T):= M(X)(p)[p] \otimes \Lambda^p(\Xi)$ for $0 \leq  p \leq  r$. Note that $M(\CT) \cong \nu_X^{\geqslant 0}M(\CT)$ and $\nu_X^{\geqslant r+1}M(\CT)=0$. For the details on the construction of relative slice filters $\nu_X^{\geqslant p}M(\mathcal{T})$ see \cite[Sec. 8]{H-K}.

\bigskip

Now, following the method which we introduced in section \ref{SectMotiveofG-bundles}, we want to explain our construction of a nested filtration on the motive of a $G$-bundle.

\bigskip

\noindent
Let $\cG$ be a $G$-bundle over $X$, where $G$  is a linear algebraic group. Let $\tilde{G}$ be a compactification of $G$. Suppose that the irreducible components of $D:= \tilde{G} \setminus G$ form a mixed Tate configuration $D=\cup_{i=1}^m D_i$, such that $D^J:= \cap_{i \in J} D_i$ is either irreducible or empty  for any $J \subseteq \{ 1,\dots,m\}$. We assume that there exist a polytope whose faces correspond to those subsets $J$ of $\{1,...,m\}$ such that $D^J$ is non empty (with face relation $\CF_2$ is a face of $\CF_1$ if we have the inclusion $J_2 \subseteq J_1$
of the corresponding sets). Let $\CP$ be the dual of this polytope. For each face $\cF$ of $\CP$, we denote by $D_{\cF}$ the associated subvariety of $D$, regarding the above correspondence. We set $D_\CP :=\ol G$. For each $1 \leq r \leq m$, let $Q_r$ be the set consisting of all faces in $\CP$ of codimension $r$. Let $\partial \cF$ denote the boundary of $\CF$, i.e. the set $\{\cF \cap \mathcal{F}' | \mathcal{F}' \in Q_1\}\setminus\{\cF\}$.\\
Let $\tilde{\cG}$ denote  the compactification $\cG \times^G \tilde{G}$ of $\cG$ and let $\mathcal{D}_{\cF}$ be the associated $D_{\cF}$-fibration over $X$.  Furthermore set $\mathcal{D}_\CP :=\ol \CG$. According to the above discussion, one may easily derive the following filtration on the motive of a $G$-bundle.

\begin{lemma}\label{NestedFilt}
There is a nested filtration on $M^c(\cG)$ by distinguished triangles, indexed by codimension $r$ and faces $\cF\in Q_r$

\begin{equation}\label{Filt1}
\CD
M^c(\bigcup_{ \cF \in Q_1} \mathcal{D}_{\cF}=\tilde{\cG}\setminus \cG) \rightarrow M^c(\mathcal{D}_\CP=\tilde{\cG}) \rightarrow M^c(\mathcal{D}_\CP\setminus\bigcup_{ \cF \in Q_1} \mathcal{D}_{\cF}=\cG)\\
\vdots\\
M^c (\bigcup_{ \cF \in Q_{r+1}} \mathcal{D}_{\cF}) \rightarrow M^c (\bigcup_{\cF \in Q_{r}} \mathcal{D}_{\cF}) \rightarrow \oplus_{\cF \in Q_r} M^c(\mathcal{D}_{\cF} \setminus \bigcup_{\cF' \in \partial \cF} \mathcal{D}_{\cF'}),\\
\vdots\\
\endCD
\end{equation}

and for each $\cF\in Q_r$ the triangle
$$
\CD
M^c (\bigcup_{\cF' \in \partial \cF} \mathcal{D}_{\cF'}) \rightarrow M^c ( \mathcal{D}_{\cF}) \rightarrow  M^c(\mathcal{D}_{\cF} \setminus \bigcup_{\cF' \in \partial \cF} \mathcal{D}_{\cF'}),\\
\endCD
$$
\noindent
is the first line of a nested filtration obtained by replacing $\CP$ by $\cF$.\\

\end{lemma}

\begin{proof}
It proceeds similar to the proof of Lemma \ref{LemMixedArrangement}.
\end{proof}
Note that the above filtration is particularly interesting when $\mathcal{D}_{\mathcal{F}}$ is a cellular fibration. In this situation we may apply Theorem \ref{Leray Hirsch for Voevodsky motives} to compute $M^c(\mathcal{D}_{\cF})$. Let us consider the following two cases.

\begin{example}\label{TorusBundle}
Let $T$ be a split torus of rank $r$ as above and $\mathcal{T}$ be a $T$-bundle over $X$. This is locally trivial for Zariski topology on $X$ by Hilbert's theorem 90. Consider the projective space $\mathbb{P}^r$ as a toric compactification of $T$ corresponding to the standard r-simplex $\Delta^r$. So we have $\CP=\Delta^r$. Note that in this case for each face $\cF \in \Delta^r$, $\mathcal{D}_{\cF}$ is in fact a projective bundle over $X$. Hence one may use the projective bundle formula \cite[Theorem~15.11]{MVW} to compute $M^c(\mathcal{D}_{\cF})$. In particular when $M^c(X)$ is mixed Tate, one may prove recursively that $M^c(\mathcal{T})$ is mixed Tate.
\end{example}

\begin{example}\label{ReductiveBundle}
Let $G$ be a semi-simple group of adjoint type and $\tilde{G}:=\overline{G}$ its wonderful compactification. In this case the polytope $\CP$ coincide with the one in Proposition \ref{TheoWnderfulCompProperties}. Note that for each face $\cF$ of $\CP$, $D_{\cF}$ admits a cell decomposition, see Proposition \ref{TheoWnderfulCompProperties}. Let us mention that for a regular compactification of a reductive group $G$, each closed orbit $D_{\cF}$ corresponding to a vertex $\cF$ is isomorphic to product of flag varieties $G/B \times G/B$. In particular $\mathcal{D}_{\cF}$ is a cellular fiberation, see \cite{Brion} for details.
\end{example}

Recall that we phrased part \ref{alef} of Theorem \ref{MGB} in Proposition \ref{motive-of-G-bundle1b}. We
now want to complete the proof of This theorem.

\begin{proof} of parts \ref{ba}, \ref{jim} and \ref{dal} of Theorem \ref{MGB}.\\
\noindent
By the well-known theorem of Drinfeld and Simpson \cite{Drin-Simp}, we may find a set $\{p_i\}$ of closed points of $C$ and a finite separable extension $k'$ of $k$ that simultaneously trivializes the restriction of $\mathcal{G}$ over $\dot{C}:=C\setminus \{p_i\}$ and the fibers over $p_i$. Consequently we obtain the following Gysin triangle
$$
M(G) \otimes M(\dot{C}_{k'})\rightarrow M(\mathcal{G}_{k'})\rightarrow \bigoplus_{p_i} M(G_{k'})(n)[2n].
$$
Now \ref{ba} follows from Lemma \ref{ReuctiveGroup}.\\

\noindent
Now we prove \ref{jim}. Since $G$ is reductive, $Z(G)=Z(G)^\circ$ is a torus. Thus it suffices to prove the statement for the associated $G^{ad}$-bundle $\CG'$, see example \ref{TorusBundle}. Regarding Example \ref{ReductiveBundle}, part \ref{jim} follows from filtration \eqref{Filt1} and the motivic Leray-Hirsch theorem \ref{Leray Hirsch for Voevodsky motives}.\\
\noindent
Finally when $k=\BC$, Iyer \cite{Iyer} establishes the Chow-K\"unneth decomposition for a projective homogeneous fiber bundle (which is locally trivial with respect to the \'etale topology on the base). Note however that this is done only after passing to rational coefficients. Since $G^{ad}$-bundle $\CG'$ is locally trivial for the \'etale topology on $X$, so does the fiber bundle $\mathcal{D}_{\cF}$. The fiber $D_F$ is itself a projective homogeneous fibre bundle over a projective homogeneous variety which is determined by the face $F$ \cite[Proposition 2.26]{EvJo}. Thus we may deduce part \ref{dal} by applying \cite[Proposition 3.8]{Iyer} to all the fiber bundles $\mathcal{D}_\cF$ appearing in the filtration \eqref{Filt1}. \\

\end{proof}
\forget{
\begin{remark} Note that even from the case of 1-motives it is clear  that the motive of the base scheme $X$ might be too far from being mixed Tate, e.g. assume that $X=C$ is a curve and recall that the motive $M(C)$ decomposes
in $\textbf{DM}_{gm}^{eff}(k)\otimes \mathbb{Q}$ as follows
\begin{equation}\label{MotiveofCurves}
M(C) = M_0(C)\oplus M_1(C)\oplus M_2(C),
\end{equation}

where $M_i(C) := Tot LiAlb^{\mathbb{Q}}(C)[i]$. For the definition of $LiAlb^{\mathbb{Q}}(C)$ and detailed explanation of the theory we refer to section 3.12 of \cite{BaV-Kah}.\\
\end{remark}

\begin{proof}\textit{of part b) of Theorem \ref{MGB}}:\\
By the well-known theorem of Drinfeld and Simpson \cite{Drin-Simp}, we may find a set $\{p_i\}$ of closed points of $C$ and a finite separable extension $k'$ of $k$ that simultaneously trivializes the restriction of $\mathcal{G}$ over $\dot{C}:=C\setminus \{p_i\}$ and the fibers over $p_i$. Consequently we obtain the following Gysin triangle
$$
M(G) \otimes M(\dot{C}_{k'})\rightarrow M(\mathcal{G}_{k'})\rightarrow \bigoplus_{p_i} M(G_{k'})(n)[2n].
$$
Now the assertion follows from Lemma \ref{ReuctiveGroup}.

\end{proof}
}






At the end of this section, it may look worthy to state the corresponding facts in the K-ring $\textbf{K}_0(\textbf{Var}_k)$ (as well as the K-ring $\textbf{K}_0(\textbf{DM}_{gm}^{eff}(k))$).\\ 



\forget
{
\begin{remark}\label{Cor_K_Ring}
Recall that for a fibration $X\to Y$ with fiber $F$, which is locally trivial for Zariski topology, one has $[X]=[Y] . [F]$ (here $[.]$ denotes the corresponding class in $K_0(Var_k)$), according to Gillet and Soul\'e \cite[Proposition 3.2.2.5]{GS}. 
Now assume that $k$ is algebraically closed and let $\CG$ be a $G$-bundle over a smooth variety $X$. Furthermore, assume that either of the assumptions \ref{alef}, \ref{ba}, \ref{jim} or \ref{dal} of Theorem \ref{MGB} hold, then one can verify that the torsor relation $[M(\CG)]=[M(G)].[M(X)]$ holds in $\textbf{K}_0(\textbf{DM}_{gm}^{eff}(k)\otimes\BQ)$. Moreover in first three cases one even has $[\CG]=[G].[X]$, already in $\textbf{K}_0(\textbf{Var}_k)$.
\end{remark}
}
\noindent
Recall that for a fibration $X\to Y$ with fiber $F$, which is locally trivial for Zariski topology, one has $[X]=[Y] . [F]$ (here $[.]$ denotes the corresponding class in $K_0(Var_k)$), according to Gillet and Soul\'e \cite[Proposition 3.2.2.5]{GS}. 

\begin{proposition}\label{Prop_K_Ring}
Assume that $k$ is algebraically closed. Let $\CG$ be a $G$-bundle over a smooth variety $X$. Furthermore, assume that either of the assumptions \ref{alef}, \ref{ba}, \ref{jim} or \ref{dal} of Theorem \ref{MGB} hold, then the torsor relation $[M(\CG)]=[M(G)].[M(X)]$ holds in $\textbf{K}_0(\textbf{DM}_{gm}^{eff}(k)\otimes\BQ)$. Moreover in first three cases we have $[\CG]=[G].[X]$, already in $\textbf{K}_0(\textbf{Var}_k)$.
\end{proposition}

\begin{proof}
Assume that \ref{alef} (resp. \ref{ba}) holds, then we may argue by filtration \ref{FiltOverCellX} (resp.  the theorem of Drinfeld and Simpson). If \ref{jim} holds, then it follows from Lemma \ref{NestedFilt} and Theorem \ref{Leray Hirsch for Voevodsky motives}, or directly by Gillet and Soul\'e \cite[Proposition 3.2.2.5]{GS}. Finally, assuming \ref{dal}, we may conclude by Lemma \ref{NestedFilt} and \cite[Proposition~3.8]{Iyer}. 
\end{proof}

\begin{remark}\label{torsorrelation}
The above torsor relation $[M(\CG)]=[M(G)].[M(X)]$ was also proved in \cite[Appendix~A]{Beh-Dhi}. However they only treat the case that the characteristic of the ground field is $0$. Note further that they also use the assumption that the center of $G$ is connected, see proof of \cite[Theorem~A.9]{Beh-Dhi}, although they don't mention this precisely. 
\end{remark}

\def\@makecol{\ifvoid\footins \setbox\@outputbox\box\@cclv
   \else\setbox\@outputbox
     \vbox{\boxmaxdepth \maxdepth
     \unvbox\@cclv\vskip\skip\footins\footnoterule\unvbox\footins}\fi
  \xdef\@freelist{\@freelist\@midlist}\gdef\@midlist{}\@combinefloats
  \setbox\@outputbox\hbox{\vrule width\marginrulewidth
        \vbox to\@colht{\boxmaxdepth\maxdepth
         \@texttop\dimen128=\dp\@outputbox\unvbox\@outputbox
         \vskip-\dimen128\@textbottom}%
        \vrule width\marginrulewidth}%
     \global\maxdepth\@maxdepth}
\newdimen\marginrulewidth
\setlength{\marginrulewidth}{.1pt}
\makeatother

\setlength{\marginrulewidth}{0pt}

\vfill

\begin{minipage}[t]{0.5\linewidth}
\noindent
Esmail Arasteh Rad\\
Universit\"at M\"unster\\
Mathematisches Institut \\
erad@uni-muenster.de
\\[1mm]
\end{minipage}
\begin{minipage}[t]{0.45\linewidth}
\noindent
Somayeh Habibi\\
School of Mathematics,\\ 
Institute for Research in\\
 Fundamental Sciences
(IPM)\\
P.\ O.\ Box: 19395-5746, Tehran, Iran
shabibi@ipm.ir
\\[1mm]
\end{minipage}

{}

\end{document}